\documentclass[11pt,a4paper]{amsart}

\usepackage[left=2.3cm, top=2.3cm,bottom=2.3cm,right=2.3cm]{geometry}

\usepackage{mathtools,amsmath,amssymb,amsthm,mathrsfs,calc,graphicx,stmaryrd,dsfont,tikz,pgfplots,bbm,float,array,epsfig,hyperref}
\usepackage[numbers,square]{natbib}
\usepackage[american]{babel}
\usepackage{amsfonts}              
\usepackage{enumitem}

\numberwithin{equation}{section}
\numberwithin{figure}{section}

\allowdisplaybreaks[4]

\newtheorem {theorem}{Theorem}[section]
\newtheorem {proposition}[theorem]{Proposition}
\newtheorem {lemma}[theorem]{Lemma}
\newtheorem {corollary}[theorem]{Corollary}

{\theoremstyle{definition}

\newtheorem*{convention*}{Convention}

\newtheorem {example}[theorem]{Example}
\newtheorem {remark}[theorem]{Remark}
}

\def\ba{\begin{array}}
\def\ea{\end{array}}
\def\bea{\begin{eqnarray} \label}
\def\eea{\end{eqnarray}}
\def\be{\begin{equation} \label}
\def\ee{\end{equation}}
\def\bit{\begin{itemize}}
\def\eit{\end{itemize}}
\def\ben{\begin{enumerate}}
\def\een{\end{enumerate}}

\def\P{\mathbb{P}}

\def\R{\mathbb{R}}
\def\RRd1{\mathbb{R}^{d+1}}

\def\dint{\textup{d}}

\newcommand{\eee}{{\rm e}}

\newcommand{\ind}{\mathbbm{1}}

\begin{document}

\title{Coverage probability of the planar Brownian convex hull}

\author{Zakhar Kabluchko}
\address{Zakhar Kabluchko: Institut f\"ur Mathematische Stochastik, Universit\"at M\"unster, Orl\'eans-Ring 10, 48149 M\"unster, Germany}
\email{zakhar.kabluchko@uni-muenster.de}

\date{}

\begin{abstract}
Let $\mathcal K$ be the convex hull of a planar Brownian motion up to time $1$. We explicitly determine the probability that $\mathcal K$ contains a prescribed point $z\in\mathbb R^2$. The proof is based on a general formula relating the radial function of a random convex body to its support function and its derivative, together with a comparison of $\mathcal K$ with an arcsine-rescaled Gaussian ellipse. We also obtain an analogous formula for the convex hull of a planar Brownian bridge. Finally, we derive small- and large-time asymptotics for the corresponding coverage probabilities.
\end{abstract}

\keywords{Planar Brownian motion; Brownian convex hull; stochastic geometry; geometric probability;
random convex sets; coverage function; absorption probability; radial function; support function;
persistence probability}

\subjclass[2020]{Primary 60D05; Secondary 60J65, 52A22}

\maketitle

\section{Introduction and main results}
\subsection{Introduction}
Let $(W_t)_{t\geq 0}$ be a standard Brownian motion in $\mathbb{R}^2$
starting at the origin, and let
\[
    \mathcal K:=\operatorname{conv}(W_t:0\leq t\leq 1)
\]
be its convex hull up to time $1$. The random convex set $\mathcal K$ is a
classical object in the study of planar Brownian motion. Almost surely, $\mathcal K$ is
compact, the origin belongs to its interior, and its boundary is a continuously
differentiable curve. The latter fact~\cite{Levy1948,CranstonHsuMarch1989}
is particularly striking in view of the roughness of Brownian paths; see also~\cite{AlexanderEldan2019} for its $d$-dimensional generalization.  The law
of $\mathcal K$ is invariant under rotations around the origin.

Several global geometric characteristics of $\mathcal K$ are known explicitly.
In particular, the expected perimeter and the expected area are given by
\[
    \mathbb E\,\operatorname{Per}(\mathcal K)=\sqrt{8\pi},
    \qquad
    \mathbb E\,\operatorname{Area}(\mathcal K)=\frac{\pi}{2},
\]
where the first formula goes back to Tak\'acs~\cite{Takacs1980}
(see also~\cite{BianeLetac2011} for an extension), and the second to
El Bachir~\cite{ElBachir1983}. Generalizations of these formulas to
$d$-dimensional Brownian motion were obtained in~\cite{Eldan2014} and
\cite{KabluchkoZaporozhets2016}.
For recent related results on convex and star hulls
of planar Brownian motion and Brownian bridges, see
\cite{Panzo2026,SandricSebekSimek2026,Sebek2024}.
Further related results concern scaling limits of random-walk convex hulls
\cite{WadeXu2015}, large deviations for the perimeter and area
\cite{AkopyanVysotsky2021}, convex hulls of L\'evy processes
\cite{MolchanovWespi2016}, high-dimensional absorption problems
\cite{TikhomirovYoussef2017}, and limit shapes for convex hulls of
stationary Gaussian processes \cite{DavydovDombry2012}.
Convex hulls of Brownian motion and related stochastic processes have also
been extensively studied in the statistical physics literature; see, for
example,
\cite{RandonFurlingMajumdarComtet2009,MajumdarMoriSchaweSchehr2021,
DeBruyneBenichouMajumdarSchehr2022}. We refer to \cite{MajumdarComtetRandonFurling2010} for a review of results
on random convex hulls.

\subsection{Main results}
Our purpose is to derive an explicit formula for the probability that a
prescribed point $z\in\R^2$ is covered by $\mathcal K$. In the terminology
of random-set theory~\cite[p.~33]{Molchanov}, the map \(z\mapsto \mathbb P(z\in\mathcal K)\)
is the \emph{coverage function} of $\mathcal K$. By rotation invariance, this probability depends
only on the Euclidean norm $\|z\|$.

\begin{theorem}[Coverage function of the planar Brownian convex hull]
\label{theo:coverage_function_brownian_hull}
For every point $z\in\R^2$,
\[
    \P(z\in\mathcal K)
    =
    \frac{4}{\pi}
    \int_0^{\pi/2}
    \overline{\Phi}\left(
        \frac{\|z\|}{\cos\theta}
    \right)\,\dint\theta
    =
    2\int_0^1
    \overline{\Phi}\left(\frac{\|z\|}{\sqrt t}\right)
    \frac{\dint t}{\pi\sqrt{t(1-t)}},
\]
where $\overline{\Phi}$ denotes the upper tail of the standard normal distribution:
\[
    \overline{\Phi}(x)
    =
    \frac{1}{\sqrt{2\pi}}
    \int_x^\infty \eee^{-s^2/2}\,\dint s.
\]
\end{theorem}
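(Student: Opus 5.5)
The plan is to express the non-coverage probability through the support function $h(\theta)=\max_{0\le t\le1}\langle W_t,e_\theta\rangle$ of $\mathcal K$, where $e_\theta=(\cos\theta,\sin\theta)$, and its derivative. I would then evaluate the resulting integral by matching the relevant joint law with a simpler random body. By rotation invariance, take $z=re_0$ with $r=\|z\|>0$. Since a.s.\ $0\in\operatorname{int}\mathcal K$ and $\partial\mathcal K$ is $C^1$, a point $z\notin\mathcal K$ has exactly two tangent lines to $\mathcal K$. Equivalently, the function
\[
g(\theta):=h(\theta)-\langle z,e_\theta\rangle
\]
has exactly two zeros on $\{\cos\theta>0\}$, and it has no zeros when $z\in\operatorname{int}\mathcal K$. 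Hence
\[
\P(z\notin\mathcal K)=\tfrac12\,\mathbb E\,\#\{\theta:\ h(\theta)=r\cos\theta\}.
\]
I will derive this in general, for any random convex body with a.s.\ $C^1$ boundary containing $0$ in its interior; this is the ``radial function versus support function'' formula. Equivalently, it can be written via the radial function $\rho$ and the relation $\rho(u)=h(\theta)/\cos(\theta-u)$ at the touching direction.

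Next, I will apply a Kac--Rice type formula to the zero count. This gives
\[
\P(z\notin\mathcal K)=\tfrac12\int_{-\pi/2}^{\pi/2}\mathbb E\Bigl[\,\bigl|h'(\theta)+r\sin\theta\bigr|\ \Bigm|\ h(\theta)=r\cos\theta\Bigr]\,p_{h(\theta)}(r\cos\theta)\,\dint\theta ,
\]
and by rotation invariance the law of $(h(\theta),h'(\theta))$ does not depend on $\theta$. For Brownian motion, $h(0)=M:=\max_{t\le1}W^{(1)}_t$ is attained at a unique time $\tau$, which is arcsine distributed. Moreover, $h'(0)=W^{(2)}_\tau$, and since the second coordinate is independent of the first, $h'(0)=\sqrt\tau\,N$ with $N$ standard normal and independent of $(M,\tau)$. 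So the only input needed is the joint law of $(M,\tau)$ from L\'evy's theory: the pre-maximum part of the path is a meander and the post-maximum part is a meander too.

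Rather than computing the integral head-on, I will use the comparison suggested in the abstract. Any random convex body whose pair $(h(0),h'(0))$ has the same joint law, with the same regularity, has the same coverage function by the formula above. The target identity
\[
\P(z\in\mathcal K)=\P\bigl(\sqrt T\,|N|>r\bigr),\qquad T\sim\text{arcsine},
\]
is indeed what the right-hand side of the theorem says, because $2\overline\Phi(r/\sqrt t)=\P(\sqrt t|N|>r)$. This suggests taking $\mathcal E=\sqrt{T}\,E$, where $E$ is a Gaussian ellipse/disc-type body independent of $T$ with explicitly known coverage. I would then check that $(h_{\mathcal E}(0),h'_{\mathcal E}(0))$ matches $(M,\sqrt\tau N)$ in law after mixing over $T$.

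The main obstacle is this matching step: identifying the correct ``arcsine-rescaled Gaussian ellipse'' and verifying the equality of the joint law of support value and derivative. I expect this to require the explicit density of $(M,\tau)$, namely $\frac{m}{\pi t^{3/2}\sqrt{1-t}}e^{-m^2/(2t)}$. A secondary technical point is justifying the Kac--Rice step, including the non-degeneracy and integrability needed to exchange expectation and counting, and excluding tangencies. Finally, the substitution $t=\cos^2\theta$ converts the arcsine integral into the first form $\frac4\pi\int_0^{\pi/2}\overline\Phi(r/\cos\theta)\,\dint\theta$.
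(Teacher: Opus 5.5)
Your framework is the paper's. The zeros of $h(\theta)-r\cos\theta$ on $(-\pi/2,\pi/2)$ are the supporting lines through $z=(r,0)$, the resulting identity depends only on the law of $(h(\theta),h'(\theta))$, and your law $(h_{\mathcal K}(0),h_{\mathcal K}'(0))\stackrel{\mathrm d}=\sqrt\tau\,(R,N)$ (with $\tau$ arcsine, $R$ Rayleigh, $N\sim N(0,1)$, all independent) is exactly Part~1 of Lemma~\ref{lem:support_statistics_K_E}. But the proposal stops at the step that carries the content. You never name the comparison body, and you verify neither of the two facts the argument needs, namely that a rotation-invariant $E$ satisfies
\[
    \bigl(h_E(\theta),h_E'(\theta)\bigr)\stackrel{\mathrm d}=(R,N)\ \text{with $R,N$ independent},
    \qquad
    \rho_E(0)\stackrel{\mathrm d}=|N|.
\]
A ``disc-type'' body is ruled out at once, since it has $h'\equiv0$. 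The missing idea is to realize the Rayleigh variable as the norm of a planar Gaussian vector. Take $E=\{(\langle G_0,y\rangle,\langle G_1,y\rangle):\|y\|\le1\}$ with $G_0,G_1$ independent standard Gaussian vectors in $\R^2$. Then $h_E(\theta)=\|U_\theta\|$ and $h_E'(\theta)=\langle U_\theta/\|U_\theta\|,V_\theta\rangle$, where $U_\theta=G_0\cos\theta+G_1\sin\theta$ and $V_\theta=-G_0\sin\theta+G_1\cos\theta$ are independent standard Gaussian vectors; this gives the first fact. Also $\rho_E(0)=|\langle G_0,v\rangle|$ for a unit vector $v\perp G_1$, which is half-normal given $G_1$; this gives the second. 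With $\mathcal E=\sqrt T\,E$, the comparison then yields $\rho_{\mathcal K}(0)\stackrel{\mathrm d}=\sqrt T\,|N|$. Only the factorization $M\mid\{\tau=t\}\stackrel{\mathrm d}=\sqrt t\,R$ is used, not the density of $(M,\tau)$ in any further way.

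Second, your pointwise Kac--Rice step is a real issue, not a formality. $h_{\mathcal K}$ is a priori only Lipschitz: it is differentiable at each \emph{fixed} $\theta$ almost surely, by uniqueness of the maximizer and Danskin's theorem. Standard Kac--Rice theorems, by contrast, require $C^1$ paths and regularity of the joint density of $(g,g')$. The paper (Lemma~\ref{lem:coarea_radial_function}) sidesteps this in three steps:
\begin{enumerate}
    \item Integrate the deterministic identity $\ind_{\{\rho(0)<r\}}=\frac12\#\{\theta\in(-\pi/2,\pi/2):h(\theta)/\cos\theta=r\}$, valid for all $r\neq\rho(0)$, against $f(r)\,\dint r$.
    \item Apply the one-dimensional area formula to the locally Lipschitz function $\theta\mapsto h(\theta)/\cos\theta$.
    \item Take expectations by Tonelli.
\end{enumerate}
This gives $\P(\rho_{\mathcal K}(0)<r)=\P(\rho_{\mathcal E}(0)<r)$ for a.e.\ $r$, hence for all $r$ by monotonicity and left-continuity. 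No regularity of either body beyond compactness and $0\in\operatorname{int}$ is needed, so neither the $C^1$ boundary nor your ``same regularity'' hypothesis is required. Tangency ($z\in\partial\mathcal K$) only affects the single value $r=\rho(0)$ and is irrelevant. Disintegrating the identity in $r$ recovers your Kac--Rice expression for a.e.\ $r$.
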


The preceding formula admits several probabilistic representations.
Let $\xi\sim N(0,1)$ be a standard normal random variable, let $\Theta$ be
uniformly distributed on $(0,\frac{\pi}{2})$, and suppose that $\xi$ and
$\Theta$ are independent. Also let $A:=\cos^2\Theta$. Then $A$ has the
arcsine distribution on $(0,1)$, that is,
\[
    \mathbb P(A\in\dint t)
    =
    \frac{\dint t}{\pi\sqrt{t(1-t)}},
    \qquad 0<t<1,
\]
and $A$ and $\xi$ are independent. Then Theorem~\ref{theo:coverage_function_brownian_hull} can equivalently be
expressed as
\[
    \mathbb P(z\in\mathcal K)
    =
    \mathbb P\bigl(|\xi|\cos\Theta>\|z\|\bigr)
    =
    \mathbb P\bigl(\sqrt A\,|\xi|>\|z\|\bigr).
\]
Further, let $(X_t)_{t\geq0}$ be a one-dimensional standard  Brownian motion independent of $A$.
Since $\max_{0\leq t\leq a}X_t$ has the same law as $\sqrt a\,|\xi|$ for
every $a>0$, we also have
\[
    \mathbb P(z\in\mathcal K)
    =
    \mathbb P\left(
        \max_{0\leq t\leq A}X_t>\|z\|
    \right).
\]

A closely related explicit formula holds for the convex hull of a planar Brownian bridge.
Let $(W_t^{\mathrm{br}})_{0\leq t\leq1}$ be a standard planar Brownian bridge
from $0$ to $0$, and define
\[
    \mathcal K^{\mathrm{br}}
    :=
    \operatorname{conv}(W_t^{\mathrm{br}}:0\leq t\leq1).
\]
With probability $1$, the set $\mathcal K^{\mathrm{br}}$ is compact and the origin belongs to its interior. Its law is invariant under rotations around the origin.
The expected perimeter and area of $\mathcal K^{\mathrm{br}}$ are known explicitly:
\[
    \mathbb E\,\operatorname{Per}(\mathcal K^{\mathrm{br}})
    =
    \sqrt{\frac{\pi^3}{2}},
    \qquad
    \mathbb E\,\operatorname{Area}(\mathcal K^{\mathrm{br}})
    =
    \frac{\pi}{3}.
\]
The perimeter formula is due to Goldman~\cite{Goldman1996}, while the area
formula was obtained in~\cite{MajumdarComtetRandonFurling2010}.
\begin{theorem}[Coverage function of the planar Brownian bridge convex hull]
\label{theo:coverage_function_brownian_bridge_hull}
For every point $z\in\mathbb R^2\setminus\{0\}$,
\[
    \mathbb P(z\in\mathcal K^{\mathrm{br}})
    =
    \int_0^1
    \exp\left\{
        -\frac{\|z\|^2}{2t(1-t)}
    \right\}
    \,\dint t
    =
    \|z\|^2\eee^{-\|z\|^2}
    \left(
        K_1(\|z\|^2)-K_0(\|z\|^2)
    \right),
\]
where $K_\lambda$ denotes the modified Bessel function of the second kind of order $\lambda\in\{0,1\}$:
\[
    K_\lambda(x)
    =
    \int_0^\infty
    \eee^{-x\cosh v}\cosh(\lambda v)\,\dint v,
    \qquad x>0.
\]
The integral representation in the first equality remains valid for
$z=0$,  with both sides equal to $1$.
\end{theorem}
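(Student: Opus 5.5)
The plan is to reduce the coverage probability to a one-point statement about the support function and its derivative in a single fixed direction, and then to evaluate that statement using the explicit law of the bridge at its directional maximum.

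\textbf{Step 1 (a fixed-direction identity).} Write $h(\psi)=\max_{0\le t\le 1}\langle W^{\mathrm{br}}_t,u_\psi\rangle$ with $u_\psi=(\cos\psi,\sin\psi)$, and let $\rho(\varphi)$ be the radial function of $\mathcal K^{\mathrm{br}}$. Since $\partial\mathcal K^{\mathrm{br}}$ is $C^1$, the boundary point with outer normal $u_\psi$ is $x(\psi)=h(\psi)u_\psi+h'(\psi)u_\psi'$. Its polar angle is $\varphi(\psi)=\psi+\arctan(h'/h)$. By rotation invariance,
\[
\P(\|z\|e_1\notin\mathcal K^{\mathrm{br}})=\frac1{2\pi}\,\mathbb E\int_0^{2\pi}\ind\{\|x(\psi)\|>\|z\|\}\,\dint\varphi(\psi),
\]
where $\dint\varphi=\bigl(1+\tfrac{hh''-h'^2}{h^2+h'^2}\bigr)\dint\psi$. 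Stationarity of $\psi\mapsto(h,h')$ then gives
\[
\P(\|z\|e_1\notin\mathcal K^{\mathrm{br}})=\mathbb E\Bigl[\ind\{h^2+h'^2>\|z\|^2\}\,\tfrac{h(h+h'')}{h^2+h'^2}\Bigr](0).
\]

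\textbf{Step 2 (removing $h''$).} The term containing $h''$ is eliminated using stationarity again: for smooth $F$ one has $\tfrac{\dint}{\dint\psi}\mathbb E F(h,h')=0$, hence
\[
\mathbb E[F_{h'}(h,h')\,h'']=-\mathbb E[F_h(h,h')\,h'].
\]
With $F$ chosen so that $F_{h'}=h\,\ind\{\cdot\}/(h^2+h'^2)$, the coverage probability becomes $\mathbb E\,g(h(0),h'(0))$ for an explicit $g$. The indicator has to be handled by smoothing and then passing to the limit. This is the general radial-function/support-function formula, and it holds for any rotation-invariant random convex body with $C^1$ boundary.

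\textbf{Step 3 (joint law of $(h(0),h'(0))$).} Here $h(0)=M$ is the maximum of the first coordinate bridge, attained at a time $\tau$, and $h'(0)=Y:=W^{\mathrm{br},(2)}_\tau$. Because the two coordinates are independent and the argmax of a bridge is uniform (Vervaat), conditionally on $\tau=t$ the variable $Y\sim N(0,t(1-t))$ is independent of $M$. The law of $M$ given $\tau=t$ comes from the Brownian meander decomposition at the maximum.

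\textbf{Step 4 (comparison with a Gaussian body).} Rather than integrating $g$ against this law directly, I would compare with a simpler random convex body having the same conditional structure. Conditionally on $t$, take an ellipse or disc built from Gaussian variables of variance $t(1-t)$, for which the coverage probability is trivially $\exp\{-\|z\|^2/(2t(1-t))\}$ (the Rayleigh tail of $W^{\mathrm{br}}_t$). I would then show that both bodies give the same value of $\mathbb E\,g$. The point is that $g$ depends only on $(h,h')$, and the only relevant features of that law, after the integration by parts, agree. Averaging over $t$ uniform on $(0,1)$ yields the first equality.

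\textbf{Step 5 (Bessel form).} The closed form follows by substituting $t=\tfrac12(1-\tanh v)$ or $t(1-t)=1/(4\cosh^2 v)$, integrating by parts once, and recognizing $K_0$ and $K_1$. For $z=0$ the integral trivially equals $1$, and $0$ lies in the interior almost surely.

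I expect the main obstacle to be Step 4. One must verify that after eliminating $h''$ the functional genuinely depends only on features shared by the bridge hull and the comparison Gaussian body. The difficulty is that the conditional law of $M$ given $\tau$ is not Gaussian, so the agreement must come from an exact cancellation produced by the integration-by-parts identity. I would first test this numerically in small $\|z\|$ expansions, checking the term $1-c\|z\|^2\log(1/\|z\|)$, before committing to the general argument.
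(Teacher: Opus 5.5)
\textbf{Main gap: Step 4 is missing its key idea, and the diagnosis behind it is off.} Nothing you have written forces $\mathbb E\,g(h(0),h'(0))$ for the bridge hull to agree with the same functional for a disc or for a planar Gaussian ellipse.

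A disc of radius $\sqrt{t(1-t)}\,R_2$ has the right coverage probability. But its pair is $(h,h')=(\sqrt{t(1-t)}R_2,0)$, so the equality of the two values of $\mathbb E\,g$ that you need is exactly the unproved content. A planar Gaussian ellipse, as in the Brownian-motion case, has $h\sim\sqrt{t(1-t)}\,\chi_2$ and coverage $2\overline{\Phi}(r/\sqrt{t(1-t)})$, which is the wrong answer.

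The missing idea is to choose the comparison body so that the law of $(h(\theta),h'(\theta))$ matches \emph{exactly}; then no cancellation is needed. For this you first have to finish Step 3. Given $\tau=t$, one has $M\stackrel{\mathrm d}=\sqrt{t(1-t)}\,R_3$ with $R_3\sim\chi_3$ (e.g.\ from Shepp's joint density of maximum, argmax and endpoint, conditioned on endpoint $0$). Hence $(h(0),h'(0))\stackrel{\mathrm d}=\sqrt{V(1-V)}\,(R_3,\xi)$.

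The conditional law is not Gaussian, but $\chi_3$ is the norm of a three-dimensional Gaussian vector. So take $\mathcal E^{\mathrm{br}}=\sqrt{V(1-V)}\,L^{\mathsf T}\overline{\mathbb B}^3$ with $Le_1=G_0$, $Le_2=G_1$ independent standard Gaussian vectors in $\mathbb R^3$.
\begin{itemize}
\item Put $U_\theta=G_0\cos\theta+G_1\sin\theta$ and $V_\theta=-G_0\sin\theta+G_1\cos\theta$, which are independent. Then $h_{\mathcal E^{\mathrm{br}}}(\theta)=\sqrt{V(1-V)}\,\|U_\theta\|$ and $h'_{\mathcal E^{\mathrm{br}}}(\theta)=\sqrt{V(1-V)}\,\langle U_\theta/\|U_\theta\|,V_\theta\rangle$. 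This pair has exactly the law $\sqrt{V(1-V)}\,(R_3,\xi)$.
\item The radial function is $\rho_{\mathcal E^{\mathrm{br}}}(0)=\sqrt{V(1-V)}\,\|\operatorname{proj}_{G_1^\perp}G_0\|\stackrel{\mathrm d}=\sqrt{V(1-V)}\,R_2$. This gives $\int_0^1\exp\{-r^2/(2t(1-t))\}\,\dint t$.
\end{itemize}
Without this, or a direct evaluation of $\mathbb E\,g$ under the law $\sqrt{t(1-t)}\,(R_3,\xi)$, the first equality is not proved.

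\textbf{Secondary gap: regularity in Steps 1--2.} Smoothing the indicator does not address it. A $C^1$ boundary excludes corners, not flat pieces. The bridge hull has (countably many) boundary segments, and at their normal directions $h$ is not differentiable. Consequently:
\begin{itemize}
\item $x(\psi)=h\,u_\psi+h'u_\psi'$ is undefined at those directions;
\item $h'$ and the polar angle $\varphi(\psi)$ jump, and $h''$ exists only as a measure;
\item along a segment your indicator is not constant.
\end{itemize}
So the formula of Step 1 is not correct as written, and the integration by parts of Step 2 picks up jump terms. Also, non-coverage corresponds to $\|x(\psi)\|<\|z\|$, not $>$.

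The paper avoids all of this. For $r>0$ it counts the supporting lines through $(r,0)$: there are two if $(r,0)\notin\mathcal Q$ and none if $(r,0)$ is interior. It then applies the one-dimensional area formula to the Lipschitz map $\theta\mapsto h(\theta)/\cos\theta$ on $(-\pi/2,\pi/2)$. This yields $\int_0^\infty f(r)\,\P(\rho(0)<r)\,\dint r$ directly as an expectation of a function of $(h(\theta),h'(\theta))$. It uses no $h''$ and holds for any random convex body with $0$ in its interior.

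Your Step 5 is fine: with $t(1-t)=1/(4\cosh^2 v)$, one integration by parts does give $x\eee^{-x}(K_1(x)-K_0(x))$.
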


Proofs of Theorems~\ref{theo:coverage_function_brownian_hull} and~\ref{theo:coverage_function_brownian_bridge_hull} will be given
in Section~\ref{sec:proof_main_theo_coverage_probab_brownian_hull}.
The proofs are intrinsically two-dimensional. A comparably explicit
extension of these results to Brownian convex hulls in dimensions $d\geq3$
would require different methods; this problem will be treated elsewhere.

\medskip
For convex hulls generated by finitely many independent random points in $\R^d$, the following results are known.  A classical
distribution-free formula for the probability that the origin belongs to the
convex hull of random points in $\R^d$, under central symmetry and a
general-position assumption, is due to Wendel~\cite{Wendel1962}. In dimension
two,  Jewell and Romano~\cite{JewellRomano1982} related the probability that
an arbitrary fixed point belongs to the convex hull of i.i.d.\ random
points to a circle-coverage problem; see also~\cite{JewellRomano1985} for a
more general study of inclusion probabilities for random convex hulls.
For Gaussian polytopes in arbitrary dimension, explicit formulas for the
coverage probability of an arbitrary fixed point were obtained in~\cite{KabluchkoZaporozhets2020}.

\subsection{Consequences and asymptotics} \label{subsec:consequences_asymptotics}
In the following we list some consequences of
Theorems~\ref{theo:coverage_function_brownian_hull} and~\ref{theo:coverage_function_brownian_bridge_hull}. Proofs will be given in
Section~\ref{sec:proofs_corollaries}.

The first result concerns the radial parts of the two coverage functions:
their derivatives admit especially simple closed forms. For a unit vector
$u\in\mathbb R^2$, define
\[
    p_{\mathcal K}(r)
    :=
    \mathbb P(ru\in\mathcal K),
    \qquad
    p_{\mathcal K^{\mathrm{br}}}(r)
    :=
    \mathbb P(ru\in\mathcal K^{\mathrm{br}}),
    \qquad r\geq0.
\]
By rotation invariance, these functions do not depend on the choice of $u$,
and clearly
\[
    p_{\mathcal K}(0)
    =
    p_{\mathcal K^{\mathrm{br}}}(0)
    =
    1.
\]

\begin{proposition}[Radial derivatives of the coverage functions]
\label{prop:radial_derivative_coverage_function}
For every $r>0$, we have
\[
    p_{\mathcal K}'(r)
    =
    -\frac{\sqrt{2}}{\pi^{3/2}}
    \eee^{-r^2/4}
    K_0\left(\frac{r^2}{4}\right),
    \qquad
    p_{\mathcal K^{\mathrm{br}}}'(r)
    =
    -2r\,\eee^{-r^2}
    K_0(r^2).
\]
Here $K_0$ denotes the modified Bessel function of the second kind of order $0$.
\end{proposition}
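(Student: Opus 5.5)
Both derivatives come from differentiating the integral representations in Theorems~\ref{theo:coverage_function_brownian_hull} and~\ref{theo:coverage_function_brownian_bridge_hull} under the integral sign, and then recognizing the result as a Bessel integral.

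\textbf{Free Brownian hull.} Start from
\[
p_{\mathcal K}(r)=2\int_0^1\overline{\Phi}(r/\sqrt t)\,\frac{\dint t}{\pi\sqrt{t(1-t)}}.
\]
For $r>0$ the $r$-derivative of the integrand is $-\frac{1}{\sqrt{2\pi}}t^{-1/2}\eee^{-r^2/(2t)}$. This is dominated locally uniformly in $r\in[r_0,\infty)$ by an integrable function of $t$, so differentiation under the integral is justified. We obtain
\[
p_{\mathcal K}'(r)=-\frac{2}{\pi\sqrt{2\pi}}\int_0^1 \frac{\eee^{-r^2/(2t)}}{t\sqrt{1-t}}\,\dint t .
\]
We then substitute $t=\frac{2}{1+\cosh v}=1/\cosh^2(v/2)$ with $v\in(0,\infty)$. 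Under this substitution $r^2/(2t)=\frac{r^2}{4}(1+\cosh v)$, and $1-t=\tanh^2(v/2)$. Also $\dint t=-\cosh^{-2}(v/2)\tanh(v/2)\,\dint v$. Hence $\frac{\dint t}{t\sqrt{1-t}}$ becomes exactly $\dint v$, and the integral equals $\eee^{-r^2/4}\int_0^\infty \eee^{-\frac{r^2}{4}\cosh v}\,\dint v=\eee^{-r^2/4}K_0(r^2/4)$. The prefactor is $\frac{2}{\pi\sqrt{2\pi}}=\frac{\sqrt2}{\pi^{3/2}}$, which matches the claim.

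\textbf{Bridge hull.} Start from $p_{\mathcal K^{\mathrm{br}}}(r)=\int_0^1\exp\{-r^2/(2t(1-t))\}\,\dint t$. For $r>0$, differentiating under the integral (again with easy domination) gives
\[
p'(r)=-r\int_0^1\frac{\eee^{-r^2/(2t(1-t))}}{t(1-t)}\,\dint t .
\]
By symmetry about $t=1/2$, we set $t=\frac{1}{2}(1-\tanh(v/2))$ and get $t(1-t)=\frac{1}{4\cosh^2(v/2)}$. Then $\frac{r^2}{2t(1-t)}=2r^2\cosh^2(v/2)=r^2(1+\cosh v)$. The derivative is $\dint t=-\frac{1}{4}\cosh^{-2}(v/2)\,\dint v$, so $\frac{\dint t}{t(1-t)}=\dint v$ up to sign. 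As $v$ runs over $\mathbb R$, $t$ covers $(0,1)$ once. Therefore the integral is $\eee^{-r^2}\int_{\mathbb R}\eee^{-r^2\cosh v}\,\dint v=2\eee^{-r^2}K_0(r^2)$. This gives $p'(r)=-2r\eee^{-r^2}K_0(r^2)$.

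\textbf{Expected obstacle.} No step is deep; the main care is in choosing the right hyperbolic substitutions so that the Jacobian cancels exactly. A secondary point is the justification of differentiation under the integral near $t=0$ and $t=1$. This works because for $r\ge r_0>0$ the Gaussian factor kills all powers of $1/t$ (and of $1/(1-t)$ in the bridge case). The remaining singularity $(1-t)^{-1/2}$ is integrable.
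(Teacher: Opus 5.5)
Your proposal is correct and follows essentially the paper's route: differentiate the integral representations under the integral sign, then use a hyperbolic substitution to arrive at $K_0(x)=\int_0^\infty e^{-x\cosh v}\,dv$. Your single substitution $t=1/\cosh^2(v/2)$ is exactly the composition of the paper's steps $t=\cos^2\theta$, $x=\tan\theta$, $x=\sinh(v/2)$, and your bridge substitution is the paper's $v=\log(t/(1-t))$ up to $v\mapsto -v$ (the paper computes $F'(x)=-e^{-x}K_0(x)$ and then applies the chain rule with $x=r^2$). Your domination remarks supply a justification that the paper leaves implicit.
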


Next we mention two asymptotic results. Let $\mathcal K_T$ be the convex hull
of the planar Brownian motion up to time $T>0$, that is,
\[
    \mathcal K_T:=\operatorname{conv}(W_t:0\leq t\leq T).
\]
Furthermore, let $(W_t^{\mathrm{br},T})_{t\in [0,T]}$ be a standard
planar Brownian bridge from $0$ to $0$ on the time interval $[0,T]$, and define
\[
    \mathcal K_T^{\mathrm{br}}
    :=
    \operatorname{conv}(W_t^{\mathrm{br},T}:0\leq t\leq T).
\]
Then $\mathcal K=\mathcal K_1$ and
$\mathcal K^{\mathrm{br}}=\mathcal K_1^{\mathrm{br}}$, and Brownian
scaling gives
\[
    \mathcal K_T\stackrel{\mathrm d}=\sqrt T\,\mathcal K,
    \qquad
    \mathcal K_T^{\mathrm{br}}
    \stackrel{\mathrm d}=\sqrt T\,\mathcal K^{\mathrm{br}}.
\]

The two asymptotic results concern the probabilities of two rare events:
long-time non-coverage of a fixed point and very short-time coverage.
To motivate the first result, let $(X_t)_{t\geq0}$ be a standard
one-dimensional Brownian motion and fix $x\in\mathbb R\setminus\{0\}$.
By the reflection principle,
\begin{align*}
    \mathbb P\bigl(X_t\neq x \text{ for all } 0\leq t\leq T\bigr)
    &=
    1-2\overline{\Phi}\left(\frac{|x|}{\sqrt T}\right)
    \sim
    \sqrt{\frac{2}{\pi}}\,
    \frac{|x|}{\sqrt T},
    \\
    \mathbb P\bigl(
        X_t\neq x \text{ for all } 0\leq t\leq T
        \,\big|\, X_T=0
    \bigr)
    &=
    1-\exp\left\{-\frac{2x^2}{T}\right\}
    \sim
    \frac{2x^2}{T},
\end{align*}
as $T\to\infty$.
Probabilities that a stochastic process does not cross a prescribed level up
to a large time are commonly referred to as persistence probabilities; see,
for example, \cite{BrayMajumdarSchehr2013} for a review. The following
proposition gives two-dimensional analogues of these one-dimensional
persistence asymptotics, with point avoidance replaced by non-coverage by
the Brownian motion and Brownian bridge convex hulls.
\begin{proposition}[Long-time non-coverage probabilities]
\label{prop:asympt_large_T}
Fix any $z\in\mathbb R^2\setminus\{0\}$. Then, as $T\to\infty$,
\[
    \mathbb P(z\notin\mathcal K_T)
    \sim
    \frac{\sqrt{2}\,\|z\|}{\pi^{3/2}\sqrt T}\log T,
    \qquad
    \mathbb P(z\notin\mathcal K_T^{\mathrm{br}})
    \sim
    \frac{\|z\|^2}{T}\log T.
\]
\end{proposition}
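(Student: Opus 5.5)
By Brownian scaling, $\mathbb P(z\notin\mathcal K_T)=\mathbb P(z/\sqrt T\notin\mathcal K)=1-p_{\mathcal K}(r)$ with $r=\|z\|/\sqrt T\to0$. The analogous identity holds for the bridge. Since $p_{\mathcal K}(0)=p_{\mathcal K^{\mathrm{br}}}(0)=1$, both statements reduce to the small-$r$ behaviour of $1-p(r)=-\int_0^r p'(s)\,\dint s$. I will use the explicit derivatives from Proposition~\ref{prop:radial_derivative_coverage_function} together with the classical asymptotics $K_0(x)\sim-\log x=\log(1/x)$ as $x\downarrow0$.

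For the Brownian hull, Proposition~\ref{prop:radial_derivative_coverage_function} gives
\[
1-p_{\mathcal K}(r)=\frac{\sqrt2}{\pi^{3/2}}\int_0^r \eee^{-s^2/4}K_0\!\left(\frac{s^2}{4}\right)\dint s .
\]
As $s\downarrow0$ the integrand is $\sim\log(4/s^2)\sim2\log(1/s)$. The antiderivative of $\log(1/s)$ is $s\log(1/s)+s$, so the integral is asymptotic to $2r\log(1/r)$. To make this rigorous, I note that the integrand equals $2\log(1/s)(1+o(1))$ uniformly on $(0,r]$ and that $\log(1/s)$ is integrable. Hence
\[
1-p_{\mathcal K}(r)\sim\frac{2\sqrt2}{\pi^{3/2}}\,r\log(1/r).
\]
Substituting $r=\|z\|/\sqrt T$ gives $\log(1/r)=\tfrac12\log T-\log\|z\|\sim\tfrac12\log T$. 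Therefore $1-p_{\mathcal K}(r)\sim\frac{\sqrt2\,\|z\|}{\pi^{3/2}\sqrt T}\log T$, as claimed.

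For the bridge,
\[
1-p_{\mathcal K^{\mathrm{br}}}(r)=\int_0^r 2s\,\eee^{-s^2}K_0(s^2)\,\dint s=\int_0^{r^2}\eee^{-u}K_0(u)\,\dint u,
\]
using the substitution $u=s^2$. Since $K_0(u)\sim\log(1/u)$, this is $\sim r^2\log(1/r^2)$. With $r^2=\|z\|^2/T$ we have $\log(T/\|z\|^2)\sim\log T$, which yields $\frac{\|z\|^2}{T}\log T$.

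The only step requiring care is justifying $\int_0^x f\sim\int_0^x g$ when $f\sim g$ near $0$ and both are positive and integrable. This is a standard $\varepsilon$-argument: choose $x$ small enough that $(1-\varepsilon)g\le f\le(1+\varepsilon)g$ on $(0,x]$. The asymptotics $K_0(x)=-\log(x/2)-\gamma+o(1)$ are classical. As a cross-check, one could bypass Proposition~\ref{prop:radial_derivative_coverage_function} for the bridge by using Theorem~\ref{theo:coverage_function_brownian_bridge_hull} directly, together with $K_1(x)\sim1/x$ and $K_0(x)\sim\log(1/x)$. This gives $1-x\eee^{-x}(K_1(x)-K_0(x))\sim x\log(1/x)$, because the $x\log x$ term dominates the $O(x)$ corrections coming from $xK_1(x)=1+O(x^2\log(1/x))$ and from $\eee^{-x}$.
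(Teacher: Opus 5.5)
Your proposal is correct and takes essentially the same route as the paper. Both arguments reduce by Brownian scaling to the behaviour of $1-p(r)$ as $r\downarrow0$, integrate the explicit derivatives from Proposition~\ref{prop:radial_derivative_coverage_function} starting at $0$, and exploit the logarithmic singularity of $K_0$. The only difference is that the paper uses the slightly sharper $K_0(x)=\log\frac1x+O(1)$ to obtain explicit $O(r)$ and $O(r^2)$ error terms directly, which makes your $\varepsilon$-comparison step unnecessary; this is a cosmetic difference.
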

A closely related discrete analogue was obtained by Vysotsky and Zaporozhets
\cite{VysotskyZaporozhets2018}. For a planar symmetric random walk
$S_1,S_2,\ldots$ whose increment distribution assigns zero mass to every
affine line, they proved
\[
    \mathbb P\bigl(0\notin\operatorname{conv}(S_1,\ldots,S_n)\bigr)
    \sim \frac{\log n}{\sqrt{\pi n}}.
\]
This asymptotic was subsequently extended to arbitrary fixed dimension
$d\geq2$ in \cite[Theorem~5.1]{KabluchkoVysotskyZaporozhets2017}.
Analogous absorption results for random walk bridges are also obtained in
\cite{VysotskyZaporozhets2018,KabluchkoVysotskyZaporozhets2017}.
Thus, in both the discrete and Brownian settings, the two-dimensional
non-coverage probability acquires an additional logarithmic factor compared
with the corresponding one-dimensional persistence asymptotic.

At the opposite end of the time scale, small-time coverage of a fixed nonzero point is exponentially unlikely
for both the Brownian-motion and Brownian-bridge convex hulls. More precisely, we have the following
asymptotics.

\begin{proposition}[Small-time coverage probabilities]
\label{prop:asympt_small_T}
Fix any $z\in\mathbb R^2\setminus\{0\}$. Then, as $T\downarrow0$,
\[
    \mathbb P(z\in\mathcal K_T)
    \sim
    \frac{2T}{\pi\|z\|^2}
    \eee^{-\|z\|^2/(2T)},
    \qquad
    \mathbb P(z\in\mathcal K_T^{\mathrm{br}})
    \sim
    \sqrt{\frac{\pi T}{8\|z\|^2}}\,
    \eee^{-2\|z\|^2/T}.
\]
\end{proposition}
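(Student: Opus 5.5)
We deduce Proposition~\ref{prop:asympt_small_T} from the explicit formulas in Theorems~\ref{theo:coverage_function_brownian_hull} and~\ref{theo:coverage_function_brownian_bridge_hull} by Laplace's method. Brownian scaling gives $\mathbb P(z\in\mathcal K_T)=p_{\mathcal K}(r)$ and $\mathbb P(z\in\mathcal K_T^{\mathrm{br}})=p_{\mathcal K^{\mathrm{br}}}(r)$ with $r:=\|z\|/\sqrt T\to\infty$. So it suffices to show
\[
p_{\mathcal K}(r)\sim \frac{2}{\pi r^2}\eee^{-r^2/2},\qquad p_{\mathcal K^{\mathrm{br}}}(r)\sim \sqrt{\frac{\pi}{8}}\,\frac1r\,\eee^{-2r^2},\qquad r\to\infty,
\]
and then substitute $r^2=\|z\|^2/T$.

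\emph{Brownian motion.} We use the first representation, $p_{\mathcal K}(r)=\frac4\pi\int_0^{\pi/2}\overline\Phi(r/\cos\theta)\,\dint\theta$. The integrand is maximal at $\theta=0$, where $1/\cos\theta=1+\theta^2/2+O(\theta^4)$. Uniformly for $\theta$ in a fixed neighbourhood of $0$, the Mills ratio gives
\[
\overline\Phi(r/\cos\theta)\sim \frac{1}{r\sqrt{2\pi}}\,\eee^{-r^2/2}\,\eee^{-r^2\theta^2/2}.
\]
Hence the integral is asymptotic to
\[
\frac{1}{r\sqrt{2\pi}}\eee^{-r^2/2}\int_0^\infty \eee^{-r^2\theta^2/2}\,\dint\theta=\frac{1}{r\sqrt{2\pi}}\eee^{-r^2/2}\cdot\frac{\sqrt{\pi/2}}{r}=\frac{\eee^{-r^2/2}}{2r^2}.
\]
Multiplying by $4/\pi$ gives $\frac{2}{\pi r^2}\eee^{-r^2/2}$. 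Away from the neighbourhood we have $1/\cos\theta\ge1+c$, so that region contributes $O(\eee^{-(1+c)^2r^2/2})$, which is negligible.

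\emph{Brownian bridge.} We use $p_{\mathcal K^{\mathrm{br}}}(r)=\int_0^1\exp\{-r^2/(2t(1-t))\}\,\dint t$. The exponent $1/(2t(1-t))$ has its minimum $2$ at $t=1/2$. Writing $t=\tfrac12+s$, we get $t(1-t)=\tfrac14-s^2$ and
\[
\frac1{2t(1-t)}=\frac{2}{1-4s^2}=2+8s^2+O(s^4).
\]
Laplace's method then gives
\[
p_{\mathcal K^{\mathrm{br}}}(r)\sim \eee^{-2r^2}\int_{\R}\eee^{-8r^2s^2}\,\dint s=\eee^{-2r^2}\sqrt{\frac{\pi}{8r^2}},
\]
which matches the claim. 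The contributions near the endpoints $t=0,1$ are exponentially smaller, since the exponent is at least $2+\delta$ there.

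The only points needing care are the routine Laplace-method estimates: uniformity of the Mills-ratio asymptotic in $\theta$, and control of the $O(\theta^4)$ and $O(s^4)$ remainders. Both are handled by restricting to $|\theta|,|s|\le r^{-2/3}$, where $r^2\theta^4$ and $r^2s^4$ tend to $0$, and bounding the complement by the exponential gap. The Bessel-function forms are not needed here.
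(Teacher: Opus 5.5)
Your proof is correct, but it takes a different route from the paper. You apply Laplace's method directly to the integral representations in Theorems~\ref{theo:coverage_function_brownian_hull} and~\ref{theo:coverage_function_brownian_bridge_hull}.

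The paper instead works with the closed-form derivatives of Proposition~\ref{prop:radial_derivative_coverage_function}. It inserts $K_0(x)\sim\sqrt{\pi/(2x)}\,\eee^{-x}$ into $-p_{\mathcal K}'(r)=\frac{\sqrt2}{\pi^{3/2}}\eee^{-r^2/4}K_0(r^2/4)$ and $-p_{\mathcal K^{\mathrm{br}}}'(r)=2r\,\eee^{-r^2}K_0(r^2)$. This gives $-p_{\mathcal K}'(r)\sim\frac{2}{\pi r}\eee^{-r^2/2}$ and $-p_{\mathcal K^{\mathrm{br}}}'(r)\sim\sqrt{2\pi}\,\eee^{-2r^2}$. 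It then integrates these positive equivalences over $(r,\infty)$, using $p(r)\to0$.

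The paper's version is shorter because the Laplace-type analysis is outsourced to a standard Bessel asymptotic. The same derivative formulas also drive the small-$r$ analysis behind Proposition~\ref{prop:asympt_large_T}, so both regimes come from one computation. Your version is self-contained. It needs neither the Bessel closed forms nor differentiation under the integral sign, and it works with $p$ itself rather than $p'$. The price is the remainder and uniformity estimates you describe.

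On that point, the Mills-ratio equivalence with $\eee^{-r^2\theta^2/2}$ is not uniform on a \emph{fixed} neighbourhood of $\theta=0$. The ratio of the two sides is $(1+o(1))\cos\theta\,\eee^{-r^2(\tan^2\theta-\theta^2)/2}$, which tends to $0$ for fixed $\theta\neq0$. So it is your shrinking window $|\theta|\le r^{-2/3}$ that actually carries the argument.

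The bookkeeping becomes almost trivial with the exact identities $\cos^{-2}\theta=1+\tan^2\theta\ge1+\theta^2$ and $\frac{2}{1-4s^2}=2+\frac{8s^2}{1-4s^2}\ge2+8s^2$. Together with $\overline{\Phi}(x)\le\frac{\eee^{-x^2/2}}{x\sqrt{2\pi}}$, they give $p_{\mathcal K}(r)\le\frac{2}{\pi r^2}\eee^{-r^2/2}$ and $p_{\mathcal K^{\mathrm{br}}}(r)\le\sqrt{\frac{\pi}{8r^2}}\,\eee^{-2r^2}$ for every $r>0$. Only the matching lower bounds then need the restriction to the window.
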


As a final consequence of
Theorems~\ref{theo:coverage_function_brownian_hull}
and~\ref{theo:coverage_function_brownian_bridge_hull},
we obtain explicit formulas for the expected radial moments of
$\mathcal K_T$ and $\mathcal K_T^{\mathrm{br}}$.

\begin{proposition}[Expected radial moments of the Brownian and Brownian bridge hulls]
\label{prop:expected_radial_moments}
For every $T>0$ and $\alpha>-2$, we have
\begin{align*}
    \mathbb{E}\left[\int_{\mathcal K_T}\|x\|^\alpha\,\dint x\right]
    &=
    T^{1+\frac{\alpha}{2}}
    \frac{2^{2+\frac{\alpha}{2}}}{\alpha+2}
    \frac{
        \Gamma\left(\frac{\alpha+3}{2}\right)^2
    }{
        \Gamma\left(\frac{\alpha+4}{2}\right)
    },
    \\
    \mathbb{E}\left[\int_{\mathcal K_T^{\mathrm{br}}}\|x\|^\alpha\,\dint x\right]
    &=
    T^{1+\frac{\alpha}{2}}
    \frac{2^{2+\frac{\alpha}{2}}\pi}{\alpha+2}
    \frac{
        \Gamma\left(\frac{\alpha+4}{2}\right)^3
    }{
        \Gamma(\alpha+4)
    }.
\end{align*}
\end{proposition}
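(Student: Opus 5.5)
By Fubini's theorem, $\mathbb E\int_{\mathcal K_T}\|x\|^\alpha\,\dint x=\int_{\R^2}\|x\|^\alpha\,\P(x\in\mathcal K_T)\,\dint x$. Brownian scaling, $\mathcal K_T\stackrel{\mathrm d}=\sqrt T\mathcal K$, and the substitution $x=\sqrt T y$ reduce everything to $T=1$ and produce the factor $T^{1+\alpha/2}$. Passing to polar coordinates, the quantity to compute is
\[
2\pi\int_0^\infty r^{\alpha+1}p_{\mathcal K}(r)\,\dint r .
\]
The same reduction applies to the bridge.

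For the Brownian hull I will use the probabilistic form of Theorem~\ref{theo:coverage_function_brownian_hull}, namely $p_{\mathcal K}(r)=\P(\sqrt A|\xi|>r)$. This gives
\[
\int_0^\infty r^{\alpha+1}p_{\mathcal K}(r)\,\dint r=\frac{\mathbb E[(\sqrt A|\xi|)^{\alpha+2}]}{\alpha+2}.
\]
By independence this factors as $\mathbb E[A^{(\alpha+2)/2}]\,\mathbb E|\xi|^{\alpha+2}$. The two moments are standard:
\[
\mathbb E|\xi|^{s}=\frac{2^{s/2}\Gamma\bigl(\tfrac{s+1}{2}\bigr)}{\sqrt\pi},
\qquad
\mathbb E A^{s/2}=\frac{B\bigl(\tfrac{s+1}{2},\tfrac12\bigr)}{\pi}=\frac{\Gamma\bigl(\tfrac{s+1}{2}\bigr)}{\sqrt\pi\,\Gamma\bigl(\tfrac{s}{2}+1\bigr)},
\]
both with $s=\alpha+2$. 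Multiplying by $2\pi/(\alpha+2)$ gives
\[
\frac{2\pi}{\alpha+2}\cdot\frac{2^{1+\alpha/2}\,\Gamma\bigl(\tfrac{\alpha+3}{2}\bigr)^2}{\pi\,\Gamma\bigl(\tfrac{\alpha+4}{2}\bigr)},
\]
which is the claimed formula, as a quick check of constants confirms. The hypothesis $\alpha>-2$ is exactly what makes $s>0$ and all these integrals finite.

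For the bridge I will use the first representation in Theorem~\ref{theo:coverage_function_brownian_bridge_hull} and Tonelli's theorem:
\[
\int_0^\infty r^{\alpha+1}\int_0^1\eee^{-r^2/(2t(1-t))}\,\dint t\,\dint r=\int_0^1\int_0^\infty r^{\alpha+1}\eee^{-r^2/(2t(1-t))}\,\dint r\,\dint t .
\]
The inner integral equals $\tfrac12\,(2t(1-t))^{(\alpha+2)/2}\,\Gamma\bigl(\tfrac{\alpha+2}{2}\bigr)$. The remaining $t$-integral is a Beta function,
\[
\int_0^1 (t(1-t))^{(\alpha+2)/2}\,\dint t=B\Bigl(\tfrac{\alpha+4}{2},\tfrac{\alpha+4}{2}\Bigr)=\frac{\Gamma\bigl(\tfrac{\alpha+4}{2}\bigr)^2}{\Gamma(\alpha+4)}.
\]
Using $\Gamma\bigl(\tfrac{\alpha+2}{2}\bigr)=\tfrac{2}{\alpha+2}\Gamma\bigl(\tfrac{\alpha+4}{2}\bigr)$ and multiplying by $2\pi$ gives
\[
2\pi\cdot\tfrac12\cdot 2^{1+\alpha/2}\cdot\frac{2}{\alpha+2}\,\frac{\Gamma\bigl(\tfrac{\alpha+4}{2}\bigr)^3}{\Gamma(\alpha+4)},
\]
which matches the stated value.

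There is no real obstacle here. The only points needing care are the justification of Fubini and Tonelli, which is immediate since all integrands are nonnegative, and the bookkeeping of powers of $2$ and of $\pi$. As a sanity check, setting $\alpha=0$ should recover the known areas $\pi/2$ and $\pi/3$. The Brownian formula gives $2\,\Gamma(3/2)^2/\Gamma(2)=\pi/2$, and the bridge formula gives $2\pi\,\Gamma(2)^3/\Gamma(4)=\pi/3$, so both check out.
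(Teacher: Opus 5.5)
Your proposal is correct and follows essentially the paper's route. The paper writes the expectation as $\frac{2\pi}{\alpha+2}\,\mathbb E[\rho_{\mathcal Q}(0)^{\alpha+2}]$ and plugs in $\rho_{\mathcal K_T}(0)\stackrel{\mathrm d}=\sqrt{TA}\,|\xi|$ and $\rho_{\mathcal K_T^{\mathrm{br}}}(0)\stackrel{\mathrm d}=\sqrt{TV(1-V)}\,R_2$; this is the same computation as your Fubini and tail-integral argument. Your explicit $t$-integral for the bridge is just the unwrapped form of $\mathbb E[(V(1-V))^{(\alpha+2)/2}]\,\mathbb E R_2^{\alpha+2}$, and all your constants check out.
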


\begin{example}
Taking $\alpha=0$ and $\alpha=2$ in
Proposition~\ref{prop:expected_radial_moments}, we obtain
\[
    \mathbb E\operatorname{Area}(\mathcal K_T)
    =
    \frac{\pi T}{2},
    \quad
    \mathbb E\operatorname{Area}(\mathcal K_T^{\mathrm{br}})
    =
    \frac{\pi T}{3},
\quad
    \mathbb E \, I(\mathcal K_T)
    =
    \frac{9\pi T^2}{16},
    \quad
    \mathbb E \, I(\mathcal K_T^{\mathrm{br}})
    =
    \frac{2\pi T^2}{15},
\]
where
$
    I(K):=\int_K \|x\|^2\,\dint x
$
denotes the moment of inertia of a convex body $K$ about the origin.
\end{example}

\section{Coverage functions of Brownian motion and Brownian bridge convex hulls}
\label{sec:proof_main_theo_coverage_probab_brownian_hull}

\subsection{Introduction}

In this section we prove
Theorems~\ref{theo:coverage_function_brownian_hull}
and~\ref{theo:coverage_function_brownian_bridge_hull}.
Let $\mathcal Q$ be a random compact convex set in $\mathbb R^2$ such that
$0\in\operatorname{int}\mathcal Q$ a.s.
For $\theta\in\mathbb R$, let
\[
    n_\theta:=(\cos\theta,\sin\theta)\in\mathbb R^2.
\]
The \emph{support function} of $\mathcal Q$ is defined by
\[
    h_{\mathcal Q}(\theta)
    :=
    \max_{z\in\mathcal Q}\langle z,n_\theta\rangle,
    \qquad \theta\in\mathbb R.
\]
The \emph{radial function} of $\mathcal Q$ is defined by
\[
    \rho_{\mathcal Q}(\theta)
    :=
    \max\{r\geq0:r n_\theta\in\mathcal Q\},
    \qquad \theta\in\mathbb R.
\]

To prove Theorem~\ref{theo:coverage_function_brownian_hull}, it suffices
to identify the distribution of the radial function
$\rho_{\mathcal K}(\theta)$ of the Brownian convex hull $\mathcal K$.
By rotational invariance, this distribution does not depend on $\theta$.
Thus, for every $z\in\mathbb R^2$,
\[
    \mathbb P(z\in\mathcal K)
    =
    \mathbb P\bigl(\rho_{\mathcal K}(0)\geq\|z\|\bigr).
\]
The proof proceeds in four steps.
\begin{itemize}
    \item We first derive a general identity relating the distribution of
    $\rho_{\mathcal Q}(\theta)$ to the support function
    $h_{\mathcal Q}$ and its derivative. The formula is valid for a general
    random convex body $\mathcal Q$.

    \item The resulting identity depends, at each angle $\theta$, only on the
    joint distribution of the ``sufficient statistic''
    \[
        \bigl(h_{\mathcal Q}(\theta),h_{\mathcal Q}'(\theta)\bigr).
    \]
    Consequently, two random convex bodies whose sufficient statistics have
    the same distribution for almost every $\theta$ have radial functions
    with the same one-dimensional distributions.

    \item We then introduce an arcsine-rescaled Gaussian ellipse
    $\mathcal E$ and show that the pairs
    \[
        \bigl(h_{\mathcal K}(\theta),h_{\mathcal K}'(\theta)\bigr)
        \quad\text{and}\quad
        \bigl(h_{\mathcal E}(\theta),h_{\mathcal E}'(\theta)\bigr)
    \]
    have the same distribution for every fixed $\theta$. It follows that
    $\rho_{\mathcal K}(\theta)$ and $\rho_{\mathcal E}(\theta)$ have the
    same distribution.

    \item Finally, we compute the distribution of
    $\rho_{\mathcal E}(\theta)$ explicitly. This yields a distributional
    representation of $\rho_{\mathcal K}(\theta)$ and, in turn, the
    coverage formula of Theorem~\ref{theo:coverage_function_brownian_hull}.
\end{itemize}

The proof of Theorem~\ref{theo:coverage_function_brownian_bridge_hull}
follows the same strategy and is given in the final subsection.
For convenience, the statements of several standard analytic tools used below,
including the change-of-variables formula and Danskin's theorem, are collected
in Appendix~\ref{sec:standard_facts}.

\subsection{A formula for the radial function}

We begin with a general identity expressing the distribution of the radial function in terms of the support function and its derivative.

\begin{lemma}[Identity for the radial function]\label{lem:coarea_radial_function}
Let $\mathcal Q$ be a random compact convex set in $\mathbb R^2$ such that
$0\in\operatorname{int}\mathcal Q$ a.s.
Then, for every $\mu\in\mathbb R$ and every nonnegative Borel function
$f:(0,\infty)\to[0,\infty]$,
\begin{align}
    &\int_0^\infty
    f(r)\,
    \mathbb P\bigl(\rho_{\mathcal Q}(\mu)<r\bigr)
    \,\dint r
    \notag\\
    &\qquad=
    \frac12
    \int_{-\pi/2}^{\pi/2}
    \mathbb E\left[
        f\left(
            \frac{h_{\mathcal Q}(\mu+\theta)}{\cos\theta}
        \right)
        \frac{
            \left|
                h_{\mathcal Q}'(\mu+\theta)\cos\theta
                +
                h_{\mathcal Q}(\mu+\theta)\sin\theta
            \right|
        }{\cos^2\theta}
    \right]
    \,\dint\theta.
    \label{eq:general_coarea_radial_function}
\end{align}
Both sides are allowed to take the value $+\infty$.
\end{lemma}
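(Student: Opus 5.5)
Fix a realization of $\mathcal Q$ and prove the identity deterministically, pathwise; then take expectations and use Tonelli. The left-hand side equals $\mathbb E\int_0^\infty f(r)\ind\{\rho_{\mathcal Q}(\mu)<r\}\,\dint r$. So it suffices to show, for a fixed convex body $Q$ with $0\in\operatorname{int}Q$,
\[
\int_{\rho_Q(\mu)}^\infty f(r)\,\dint r
=\frac12\int_{-\pi/2}^{\pi/2} f\Bigl(\frac{h_Q(\mu+\theta)}{\cos\theta}\Bigr)\frac{|h_Q'(\mu+\theta)\cos\theta+h_Q(\mu+\theta)\sin\theta|}{\cos^2\theta}\,\dint\theta .
\]
By rotation we may take $\mu=0$. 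Here $h_Q$ is Lipschitz and hence differentiable a.e., so the right-hand side is well defined.

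The geometric idea is as follows. A point $rn_0$ with $r>\rho_Q(0)$ lies outside $Q$. Exactly two supporting lines of $Q$ pass through it, namely the two tangent lines from an external point. A supporting line with normal $n_\theta$, $\theta\in(-\pi/2,\pi/2)$, meets the ray $\{rn_0\}$ at $r=g(\theta):=h_Q(\theta)/\cos\theta$. Conversely, every $\theta\in(-\pi/2,\pi/2)$ gives a line hitting the positive ray at $g(\theta)>0$, and $g(\theta)\ge\rho_Q(0)$, with equality for the normal(s) at the boundary point $\rho_Q(0)n_0$.

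The function $g$ is locally Lipschitz on $(-\pi/2,\pi/2)$. Its derivative is
\[
g'(\theta)=\frac{h_Q'(\theta)\cos\theta+h_Q(\theta)\sin\theta}{\cos^2\theta},
\]
and $g(\theta)\to\infty$ as $\theta\to\pm\pi/2$ because $h_Q>0$. The right-hand side is therefore $\frac12\int f(g(\theta))|g'(\theta)|\,\dint\theta$. By the area/change-of-variables formula for Lipschitz functions (Appendix), this equals $\frac12\int_0^\infty f(r)\,\#\{\theta\in(-\pi/2,\pi/2):g(\theta)=r\}\,\dint r$.

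It remains to show that $\#g^{-1}(r)=2$ for a.e.\ $r>\rho_Q(0)$ and $\#g^{-1}(r)=0$ for $r<\rho_Q(0)$. The second claim follows from $g\ge\rho_Q(0)$: a supporting line cannot cut the ray inside $Q$. For the first claim, I would show that $g$ is nonincreasing on $(-\pi/2,\theta_0]$ and nondecreasing on $[\theta_0,\pi/2)$, where $\theta_0$ is a minimizer, and is strictly monotone away from the level $\rho_Q(0)$. One route uses Danskin's theorem: $h_Q'(\theta)=\langle x_\theta,n_\theta'\rangle$ with $x_\theta$ a support point, which gives $g'(\theta)\cos^2\theta=\langle x_\theta,(-\sin\theta,\cos\theta)\rangle\cos\theta+\langle x_\theta,n_\theta\rangle\sin\theta=x_\theta^{(2)}$, the second coordinate of the support point. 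As $\theta$ increases, the support point moves counterclockwise around $\partial Q$. Hence $x_\theta^{(2)}$ changes sign exactly once, at the boundary point on the positive $x$-axis, which gives the unimodality. A flat piece of $g$ corresponds to $x_\theta^{(2)}=0$, i.e.\ $x_\theta=\rho_Q(0)n_0$, so flat pieces occur only at the level $\rho_Q(0)$. An alternative, fully elementary route is the tangent-line argument: each level $r>\rho_Q(0)$ is hit exactly twice, once by each tangent line from $rn_0$.

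The main obstacle is this counting step: nonsmooth $Q$, corners (intervals of $\theta$ sharing a support point), and edges (jumps of the support point). Corners make $g$ strictly monotone there, which causes no trouble. Edges are measure-zero in $\theta$, and a single $\theta$ cannot produce a positive-measure set of levels. With the multiplicity identified, the right-hand side equals $\frac12\int_{\rho_Q(0)}^\infty 2f(r)\,\dint r$, and taking expectations finishes the proof, including the case where both sides are infinite.
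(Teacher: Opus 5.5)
Your proposal is correct and follows essentially the same route as the paper: a pathwise identity for $g(\theta)=h_Q(\theta)/\cos\theta$ via the one-dimensional change-of-variables formula, with the crossing number $\#g^{-1}(r)$ equal to $2\,\ind\{r>\rho_Q(0)\}$ except at the single level $r=\rho_Q(0)$, followed by Tonelli and a rotation for general $\mu$. For the counting step the paper uses exactly your ``alternative'' tangent-line argument (two supporting lines through an exterior point, none through an interior point), which avoids the corner/edge bookkeeping needed in your Danskin-based unimodality route; that route, resting on the identity $g'(\theta)\cos^2\theta=x_\theta^{(2)}$, also works.
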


\begin{remark}
For every realization of $\mathcal Q$, the function
$\theta\mapsto h_{\mathcal Q}(\theta)$ is Lipschitz~\cite[Theorem~H.1]{Molchanov}
and hence absolutely continuous. Thus its derivative
$h_{\mathcal Q}'(\theta)$ is well defined for Lebesgue-a.e.\ $\theta$. At points at which the derivative does not exist, we set
$h_{\mathcal Q}'(\theta):=0$.
\end{remark}

\begin{proof}[Proof of Lemma~\ref{lem:coarea_radial_function}]
We first consider $\mu=0$. For $\theta\in\mathbb R$, write
\[
    n_\theta=(\cos\theta,\sin\theta),
    \qquad
    n_\theta^\perp=(-\sin\theta,\cos\theta).
\]

The supporting lines of $\mathcal Q$ are indexed by
$\theta\in(-\pi,\pi]$: for every such $\theta$ there is a unique
supporting line of $\mathcal Q$ with outer normal vector $n_\theta$,
and its parametric representation is
\[
    t\mapsto h_{\mathcal Q}(\theta)n_\theta+t n_\theta^\perp,
    \qquad t\in\mathbb R.
\]
For $\theta\in(-\pi/2,\pi/2)$, this line intersects the positive
$x$-axis at the point $(R_{\mathcal Q}(\theta),0)$, where
\begin{equation}\label{eq:R_mathcal_Q_def}
    R_{\mathcal Q}(\theta)
    :=
    \frac{h_{\mathcal Q}(\theta)}{\cos\theta},
    \qquad
    -\frac{\pi}{2}<\theta<\frac{\pi}{2}.
\end{equation}
For $\theta\in(-\pi,\pi]\setminus(-\pi/2,\pi/2)$, the supporting line
does not intersect the positive $x$-axis. For $r>0$, let
$N_{\mathcal Q}(r)$ be the number of supporting lines of $\mathcal Q$
passing through $(r,0)$. Thus
\begin{equation}\label{eq:N_mathcal_Q_as_crossing_number}
    N_{\mathcal Q}(r)
    =
    \#\left\{
        \theta\in(-\pi/2,\pi/2):
        R_{\mathcal Q}(\theta)=r
    \right\}.
\end{equation}

On the other hand, if $0\in\operatorname{int}\mathcal Q$, then
elementary convex geometry implies that $N_{\mathcal Q}(r)=2$ if
$(r,0)\notin\mathcal Q$ and $N_{\mathcal Q}(r)=0$ if $(r,0)$ belongs
to the interior of $\mathcal Q$; see Lemma~\ref{lem:tangent_lines}.
Thus
\[
    N_{\mathcal Q}(r)
    =
    2\,\ind_{\{\rho_{\mathcal Q}(0)<r\}}
\]
for every $r>0$ except possibly $r=\rho_{\mathcal Q}(0)$. It follows
that, for every nonnegative Borel function
$f:(0,\infty)\to[0,\infty]$,
\[
    \int_0^\infty
    f(r)\,
    \ind_{\{\rho_{\mathcal Q}(0)<r\}}
    \,\dint r
    =
    \frac12\int_0^\infty
    f(r)\,
    N_{\mathcal Q}(r)
    \,\dint r.
\]

Let us evaluate the right-hand side using
\eqref{eq:N_mathcal_Q_as_crossing_number}. Since $\mathcal Q$ is compact, its support function is Lipschitz. By
\eqref{eq:R_mathcal_Q_def}, the function $R_{\mathcal Q}$ is locally
Lipschitz on $(-\pi/2,\pi/2)$, and hence differentiable for
Lebesgue-a.e.\ $\theta$.  Moreover, for Lebesgue-a.e.\
$\theta\in(-\pi/2,\pi/2)$,
\[
    R_{\mathcal Q}'(\theta)
    =
    \frac{
        h_{\mathcal Q}'(\theta)\cos\theta
        +
        h_{\mathcal Q}(\theta)\sin\theta
    }{\cos^2\theta}.
\]
The change-of-variables formula, see
Lemma~\ref{lem:one_dimensional_coarea_formula}, gives
\[
    \int_0^\infty
    f(r)\,
    \ind_{\{\rho_{\mathcal Q}(0)<r\}}
    \,\dint r
    =
    \frac12\int_0^\infty
    f(r)\,
    N_{\mathcal Q}(r)
    \,\dint r
    =
    \frac12\int_{-\pi/2}^{\pi/2}
    f(R_{\mathcal Q}(\theta))
    |R_{\mathcal Q}'(\theta)|
    \,\dint\theta
\]
for almost every realization of $\mathcal Q$. Taking expectations and
using Tonelli's theorem proves
\eqref{eq:general_coarea_radial_function} for $\mu=0$.

For general $\mu$, let $\widetilde{\mathcal Q}$ be the rotation of
$\mathcal Q$ through the angle $-\mu$. Then $\rho_{\widetilde{\mathcal Q}}(0)=\rho_{\mathcal Q}(\mu)$,
whereas
\[
    h_{\widetilde{\mathcal Q}}(\theta)
    =
    h_{\mathcal Q}(\mu+\theta),
    \qquad
    h_{\widetilde{\mathcal Q}}'(\theta)
    =
    h_{\mathcal Q}'(\mu+\theta)
\]
for Lebesgue-a.e.\ $\theta$. Applying the already proved case $\mu=0$
to $\widetilde{\mathcal Q}$ yields
\eqref{eq:general_coarea_radial_function}.
\end{proof}
\begin{corollary}[Comparison by the support function and its derivative]\label{cor:radial_comparison_support_statistics}
Let $\mathcal Q_1$ and $\mathcal Q_2$ be random compact convex sets in
$\mathbb R^2$ such that $0\in\operatorname{int}\mathcal Q_i$ almost surely,  $i=1,2$.
Suppose that, for Lebesgue-a.e.\ $\theta\in\mathbb R$,
\[
    \bigl(
        h_{\mathcal Q_1}(\theta),
        h_{\mathcal Q_1}'(\theta)
    \bigr)
    \stackrel{\mathrm d}=
    \bigl(
        h_{\mathcal Q_2}(\theta),
        h_{\mathcal Q_2}'(\theta)
    \bigr).
\]
Then, for every $\mu\in\mathbb R$,
\[
    \rho_{\mathcal Q_1}(\mu)
    \stackrel{\mathrm d}=
    \rho_{\mathcal Q_2}(\mu).
\]
\end{corollary}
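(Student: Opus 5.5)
The plan is to apply Lemma~\ref{lem:coarea_radial_function} to $\mathcal Q_1$ and $\mathcal Q_2$ with a suitable family of test functions, and then read off the distribution of $\rho_{\mathcal Q_i}(\mu)$ from the left-hand side.

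Fix $\mu\in\mathbb R$. For $i=1,2$, apply \eqref{eq:general_coarea_radial_function} with a nonnegative Borel function $f$. For each fixed $\theta\in(-\pi/2,\pi/2)$, the integrand on the right-hand side has the form
\[
    \mathbb E\bigl[G_{f,\theta}\bigl(h_{\mathcal Q_i}(\mu+\theta),h_{\mathcal Q_i}'(\mu+\theta)\bigr)\bigr],
\]
where
\[
    G_{f,\theta}(a,b)=f\!\left(\frac{a}{\cos\theta}\right)\frac{|b\cos\theta+a\sin\theta|}{\cos^2\theta}.
\]
This $G_{f,\theta}$ is a nonnegative Borel function on $(0,\infty)\times\mathbb R$; note that $h_{\mathcal Q_i}>0$ because $0\in\operatorname{int}\mathcal Q_i$. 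By hypothesis the pairs $(h_{\mathcal Q_i}(\mu+\theta),h'_{\mathcal Q_i}(\mu+\theta))$ for $i=1,2$ have the same law for Lebesgue-a.e.\ $\theta$, since translating the exceptional null set by $\mu$ keeps it null. Hence the two expectations agree for a.e.\ $\theta$, and integrating in $\theta$ gives equal right-hand sides, possibly both equal to $+\infty$.

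To guarantee finiteness, choose $f=\ind_{(0,s]}$ for $s>0$. The left-hand side is then
\[
    \int_0^s\mathbb P\bigl(\rho_{\mathcal Q_i}(\mu)<r\bigr)\,\dint r\le s<\infty.
\]
Writing $F_i(r):=\mathbb P(\rho_{\mathcal Q_i}(\mu)<r)$, we obtain
\[
    \int_0^s F_1(r)\,\dint r=\int_0^s F_2(r)\,\dint r\qquad\text{for all } s>0.
\]
Each $F_i$ is nondecreasing and left-continuous. Two such functions with equal integrals over every interval $[0,s]$ agree a.e., and by left-continuity they then agree everywhere on $(0,\infty)$. The variables $\rho_{\mathcal Q_i}(\mu)$ are strictly positive, so these functions determine their laws, and therefore $\rho_{\mathcal Q_1}(\mu)\stackrel{\mathrm d}=\rho_{\mathcal Q_2}(\mu)$.

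The only step that needs care is measurability. The joint map $(\omega,\theta)\mapsto(h_{\mathcal Q}(\mu+\theta),h'_{\mathcal Q}(\mu+\theta))$ must be measurable for Tonelli and for the per-$\theta$ identification to apply. This is already implicit in Lemma~\ref{lem:coarea_radial_function}: one can realize the derivative as a $\limsup$ of difference quotients along rational increments, set to $0$ where that limit fails, so no extra work is required.
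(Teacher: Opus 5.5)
Your proposal is correct and follows the paper's proof: you apply Lemma~\ref{lem:coarea_radial_function} to both sets, use the hypothesis to equate the right-hand sides, deduce $\mathbb P(\rho_{\mathcal Q_1}(\mu)<r)=\mathbb P(\rho_{\mathcal Q_2}(\mu)<r)$ for a.e.\ $r$, and upgrade to every $r>0$ by monotonicity and left-continuity. Your extra details (the test functions $f=\ind_{(0,s]}$ to make the integrals finite, shifting the exceptional null set by $\mu$, and the measurability remark) only make explicit what the paper's ``it follows'' step leaves implicit.
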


\begin{proof}
Fix $\mu\in\mathbb R$. By Lemma~\ref{lem:coarea_radial_function} and the assumed equality in
distribution, for every nonnegative Borel function
$f:(0,\infty)\to[0,\infty]$,
\[
    \int_0^\infty
    f(r)\,
    \mathbb P\bigl(\rho_{\mathcal Q_1}(\mu)<r\bigr)
    \,\dint r
    =
    \int_0^\infty
    f(r)\,
    \mathbb P\bigl(\rho_{\mathcal Q_2}(\mu)<r\bigr)
    \,\dint r.
\]
It follows that
\[
    \mathbb P\bigl(\rho_{\mathcal Q_1}(\mu)<r\bigr)
    =
    \mathbb P\bigl(\rho_{\mathcal Q_2}(\mu)<r\bigr)
\]
for Lebesgue-a.e.\ $r>0$. Both sides are nondecreasing and
left-continuous functions of $r$, hence they are equal for every
$r>0$. This proves the claim.
\end{proof}

\subsection{The Brownian hull and a Gaussian ellipse}
Now we apply the preceding results to  two random convex bodies. The first one is the convex hull of the standard planar Brownian motion up to time $1$, denoted by
\[
    \mathcal K
    =
    \operatorname{conv}(W_t:0\leq t\leq 1).
\]
The second random convex body is the arcsine-rescaled Gaussian ellipse $\mathcal E$ defined as follows.  Let $A$ have the arcsine distribution on $(0,1)$, and let $G_0,G_1$ be independent standard Gaussian vectors in $\mathbb R^2$,
independent of $A$. Let $L:\mathbb R^2\to\mathbb R^2$ be the random
linear map defined by
\[
    Le_1=G_0,
    \qquad
    Le_2=G_1,
\]
where $e_1= (1,0)$ and $e_2 = (0,1)$ form the standard orthonormal basis of $\R^2$. Then the \emph{arcsine-rescaled Gaussian ellipse} is defined as
\[
    \mathcal E
    :=
    \sqrt A\,L^{\mathsf T}\overline{\mathbb B}^2,
\]
where $\overline{\mathbb B}^2$ is the closed unit disk in $\R^2$.
Denote the support functions of $\mathcal K$ and $\mathcal E$ by
$h_{\mathcal K}$ and $h_{\mathcal E}$, respectively, and their radial
functions by $\rho_{\mathcal K}$ and $\rho_{\mathcal E}$.

\begin{lemma}[Equality in distribution of sufficient statistics]\label{lem:support_statistics_K_E}
For every fixed $\theta\in\mathbb R$,
\[
    \bigl(
        h_{\mathcal K}(\theta),
        h_{\mathcal K}'(\theta)
    \bigr)
    \stackrel{\mathrm d}=
    \bigl(
        h_{\mathcal E}(\theta),
        h_{\mathcal E}'(\theta)
    \bigr).
\]
More precisely, their common distribution is the distribution of
\[
    \bigl(\sqrt A\,R,\sqrt A\,\xi\bigr),
\]
where $A$, $R$, and $\xi$ are independent, $A$ has the arcsine
distribution on $(0,1)$, $\xi\sim N(0,1)$, and $R$ has the Rayleigh (or $\chi_2$)
density $r\mapsto r\eee^{-r^2/2}$, $r>0$.
\end{lemma}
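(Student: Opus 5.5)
The plan is to compute the law of $(h(\theta),h'(\theta))$ separately for $\mathcal E$ and for $\mathcal K$ and check that both equal the law of $(\sqrt A\,R,\sqrt A\,\xi)$. By rotation invariance of both laws, only one angle needs treatment on the Brownian side. For $\mathcal E$, I only need a rotation of $G_0,G_1$; the fixed-$\theta$ statement is exactly what is required there.

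\emph{The ellipse.} The support function of a linear image of the disk is $h_{L^{\mathsf T}\overline{\mathbb B}^2}(n)=\sup_{\|x\|\le1}\langle x,Ln\rangle=\|Ln\|$. Hence
\[
h_{\mathcal E}(\theta)=\sqrt A\,\|G(\theta)\|, \qquad G(\theta):=\cos\theta\, G_0+\sin\theta\, G_1 .
\]
Set $G^\perp(\theta):=-\sin\theta\, G_0+\cos\theta\, G_1$. Since $G(\theta)\neq0$ a.s., differentiation gives
\[
h_{\mathcal E}'(\theta)=\sqrt A\,\frac{\langle G(\theta),G^\perp(\theta)\rangle}{\|G(\theta)\|}.
\]
For fixed $\theta$, the vectors $G(\theta)$ and $G^\perp(\theta)$ are independent standard Gaussian vectors in $\mathbb R^2$, because they come from an orthogonal rotation of $(G_0,G_1)$. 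Conditionally on $G(\theta)$, the quantity $\langle G(\theta),G^\perp(\theta)\rangle/\|G(\theta)\|$ is the projection of $G^\perp(\theta)$ onto a unit vector, so it is $N(0,1)$. Its law does not depend on $G(\theta)$, so it is independent of $\|G(\theta)\|$, which is Rayleigh. Both factors are independent of $A$, which gives the claimed law.

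\emph{The Brownian hull.} I take $\theta=0$ and write $W=(X,Y)$ with independent one-dimensional Brownian motions $X$ and $Y$. Then
\[
h_{\mathcal K}(\theta)=\max_{0\le t\le1}\bigl(\cos\theta\, X_t+\sin\theta\, Y_t\bigr).
\]
Almost surely $X$ attains its maximum on $[0,1]$ at a unique time $\tau$. Danskin's theorem (Appendix) then shows that $h_{\mathcal K}$ is differentiable at $0$. Since the maximizer is unique, the derivative is $h_{\mathcal K}'(0)=\langle W_\tau,n_0^\perp\rangle=Y_\tau$. Thus $(h_{\mathcal K}(0),h_{\mathcal K}'(0))=(M,Y_\tau)$ with $M:=\max_{[0,1]}X=X_\tau$.

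\emph{Identifying the joint law.} The pair $(\tau,M)$ is a functional of $X$ alone, so it is independent of $Y$. Conditionally on $\tau=a$, we therefore get $Y_\tau\sim\sqrt a\,\xi$, independent of $M$. It remains to use the classical joint density (e.g.\ via Denisov's path decomposition at the maximum, or via the reflection principle):
\[
\mathbb P(\tau\in\dint a,\,M\in\dint m)=\frac{m}{\pi a^{3/2}\sqrt{1-a}}\,\eee^{-m^2/(2a)}\,\dint a\,\dint m .
\]
This factorizes: $\tau$ has the arcsine law, and $M/\sqrt\tau$ is Rayleigh and independent of $\tau$. Setting $A:=\tau$, $R:=M/\sqrt\tau$ and $\xi:=Y_\tau/\sqrt\tau$ gives three independent variables with the stated laws.

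The main obstacle I expect is this last step together with the Danskin step. One must verify the joint density of $(\tau,M)$, which I would quote from the literature or derive from the reflection principle. One must also justify that, at the fixed angle, $h_{\mathcal K}'(0)$ exists almost surely and equals $Y_\tau$, so that it agrees with the a.e.\ convention in the Remark. The ellipse computation is routine.
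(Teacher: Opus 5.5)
Your proposal is correct and follows essentially the same route as the paper: Danskin's theorem identifies $h_{\mathcal K}'(\theta)$ as the orthogonal coordinate at the unique argmax, and the classical joint law of the maximum and its location, together with independence of the orthogonal Brownian component, gives the law of the pair. For the ellipse you use the same rotation of $(G_0,G_1)$ into independent Gaussian vectors. The only differences are cosmetic: you reduce to $\theta=0$ by rotation invariance, whereas the paper works directly with the rotated pair $B^{(\theta)},C^{(\theta)}$, and you write out the joint density of $(\tau,M)$ explicitly, which does factor into the arcsine and Rayleigh laws as you claim.
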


\begin{proof}
Recall that for  $\theta\in\mathbb R$  we write
\[
    n_\theta=(\cos\theta,\sin\theta),
    \qquad
    n_\theta^\perp=(-\sin\theta,\cos\theta).
\]

\medskip
\noindent
\emph{Part 1.} We first consider $\mathcal K$.  Write $W_t=(X_t,Y_t)$, where $(X_t)_{t\geq 0}$ and $(Y_t)_{t\geq 0}$ are independent standard one-dimensional Brownian
motions.
For fixed $\theta\in \mathbb R$, set
\[
    B_t^{(\theta)}
    :=
    \langle W_t,n_\theta\rangle = X_t\cos\theta+Y_t\sin\theta,
    \qquad
    C_t^{(\theta)}
    :=
    \langle W_t,n_\theta^\perp\rangle = -X_t\sin\theta+Y_t\cos\theta, \qquad t\geq 0.
\]
The processes $(B^{(\theta)}_t)_{t\geq 0}$ and $(C^{(\theta)}_t)_{t\geq 0}$ are independent standard
one-dimensional Brownian motions. Define also
\[
    \tau_\theta
    :=
    \operatorname*{argmax}_{0\leq t\leq 1}B_t^{(\theta)}.
\]
For every $\theta$, the maximum is attained at a unique time almost surely. Note that $h_{\mathcal K}(\theta) = B^{(\theta)}_{\tau_\theta} = X_{\tau_\theta}\cos\theta+Y_{\tau_\theta}\sin\theta$. Danskin's formula, see Lemma~\ref{lem:danskin_special_case_cos_sin},  yields: For every $\theta$, there is a null set in the probability space on which $(W_t)_{t\geq 0}$ is defined such that outside this null set,
\begin{equation*}
    h_{\mathcal K}'(\theta)
    =
    -X_{\tau_\theta}\sin \theta+Y_{\tau_\theta}\cos\theta
    =
    C_{\tau_\theta}^{(\theta)}.
\end{equation*}

We now characterize the joint distribution of $(h_{\mathcal K}(\theta), h_{\mathcal K}'(\theta))$ or, equivalently, $(B^{(\theta)}_{\tau_\theta}, C_{\tau_\theta}^{(\theta)})$.  The random variable $\tau_\theta$ has the arcsine distribution on
$(0,1)$. Moreover, conditionally on $\tau_\theta=t$,
\[
    B_{\tau_\theta}^{(\theta)}
    \stackrel{\mathrm d}=
    \sqrt t\,R,
\]
where $R$ has the Rayleigh (or $\chi_2$) density $r\mapsto r\eee^{-r^2/2}$, $r>0$; see~\cite[p.~425]{Shepp1979} or~\cite[Section~2.8]{KaratzasShreve1991}.
Since $(C^{(\theta)}_t)_{t\geq 0}$ is independent of $(B^{(\theta)}_t)_{t\geq0}$, conditionally on
$\tau_\theta=t$ the random variable
$C_{\tau_\theta}^{(\theta)}=C_t^{(\theta)}$ is independent of
$B_{\tau_\theta}^{(\theta)}$ and has distribution $N(0,t)$. Hence
\[
    \mathcal L\left(
        h_{\mathcal K}(\theta),
        h_{\mathcal K}'(\theta)
        \,\middle|\,
        \tau_\theta=t
    \right)
    =
    \mathcal L\left(
        \sqrt t\,R,
        \sqrt t\,\xi
    \right),
\]
where $R$ has the Rayleigh distribution,  $\xi\sim N(0,1)$, and $R$ and $\xi$ are independent. Since $\tau_\theta$ has
the same arcsine distribution as $A$, it follows that
\[
    \bigl(
        h_{\mathcal K}(\theta),
        h_{\mathcal K}'(\theta)
    \bigr)
    \stackrel{\mathrm d}=
    \bigl(
        \sqrt A\,R,
        \sqrt A\,\xi
    \bigr).
\]

\medskip
\noindent
\emph{Part 2.}
We next consider $\mathcal E$. Recall that
\[
    \mathcal E=\sqrt A\,L^{\mathsf T}\overline{\mathbb B}^2,
    \qquad
    Le_1=G_0,\quad Le_2=G_1,
\]
where $G_0,G_1$ are independent standard Gaussian vectors in
$\mathbb R^2$, independent of $A$. Its support function is
\[
    h_{\mathcal E}(\theta)
    =
    \sqrt A\,
    \|G_0\cos\theta+G_1\sin\theta\|.
\]
Put
\[
    U_\theta
    :=
    G_0\cos\theta+G_1\sin\theta,
    \qquad
    V_\theta
    :=
    -G_0\sin\theta+G_1\cos\theta.
\]
By rotational invariance of the Gaussian distribution,
$U_\theta$ and $V_\theta$ are independent standard Gaussian vectors
in $\mathbb R^2$. Since $U_\theta\neq0$ almost surely,
\[
    h_{\mathcal E}'(\theta)
    =
    \sqrt A\,
    \left\langle
        \frac{U_\theta}{\|U_\theta\|},
        V_\theta
    \right\rangle.
\]
Now $\|U_\theta\|$ has the Rayleigh density
$r\mapsto r\eee^{-r^2/2}$, while, conditionally on $U_\theta$,
\[
    \left\langle
        \frac{U_\theta}{\|U_\theta\|},
        V_\theta
    \right\rangle
    \sim N(0,1).
\]
Since this conditional distribution does not depend on the value of $U_\theta$,
the two random variables are independent. Thus
\[
    \bigl(
        h_{\mathcal E}(\theta),
        h_{\mathcal E}'(\theta)
    \bigr)
    \stackrel{\mathrm d}=
    \bigl(
        \sqrt A\,R,
        \sqrt A\,\xi
    \bigr),
\]
where $A,R,\xi$ are independent and have the distributions stated in
the lemma. This proves the claim.
\end{proof}

\subsection{Distributional representation for the radial functions}
By the preceding results, $\rho_{\mathcal K}(\theta)$ has the same distribution as $\rho_{\mathcal E}(\theta)$, for every $\theta$. Thus it remains to identify the distribution of $\rho_{\mathcal E}(\theta)$, which is elementary.
\begin{lemma}[Radial function of the Gaussian ellipse]\label{lem:radial_function_gaussian_ellipse}
For every fixed $\theta\in\mathbb R$,
\[
    \rho_{\mathcal E}(\theta)
    \stackrel{\mathrm d}=
    \sqrt A\,|\xi|,
\]
where $A$ has the arcsine distribution on $(0,1)$, $\xi\sim N(0,1)$,
and $A$ and $\xi$ are independent. In particular, for every $r\geq0$,
\[
    \mathbb P\bigl(\rho_{\mathcal E}(\theta)\geq r\bigr)
    =
    \int_0^1
    \mathbb P\left(
        |\xi|\geq\frac{r}{\sqrt t}
    \right)
    \frac{\dint t}{\pi\sqrt{t(1-t)}}.
\]
\end{lemma}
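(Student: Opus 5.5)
We compute the radial function of the ellipse directly from its definition, conditionally on $A$. Since $\mathcal E=\sqrt A\,L^{\mathsf T}\overline{\mathbb B}^2$ and $L^{\mathsf T}$ is a.s.\ invertible ($G_0,G_1$ are a.s.\ linearly independent), a point $x$ lies in $\mathcal E$ if and only if $\|(L^{\mathsf T})^{-1}x\|\le\sqrt A$. Hence
\[
    \rho_{\mathcal E}(\theta)=\frac{\sqrt A}{\|(L^{\mathsf T})^{-1}n_\theta\|}.
\]
By the rotation invariance of the pair $(G_0,G_1)$ (the law of $L$ is invariant under $L\mapsto OL$ and $L\mapsto LO$ for orthogonal $O$), the law of $\rho_{\mathcal E}(\theta)$ does not depend on $\theta$. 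We may therefore take $\theta=0$, that is, $n_\theta=e_1$.

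The key step is to identify the law of $1/\|(L^{\mathsf T})^{-1}e_1\|$. The rows of $L^{\mathsf T}$ are $G_0^{\mathsf T},G_1^{\mathsf T}$, so $w:=(L^{\mathsf T})^{-1}e_1$ is the vector satisfying $\langle G_0,w\rangle=1$ and $\langle G_1,w\rangle=0$. Thus $w$ is orthogonal to $G_1$, say $w=s\,G_1^\perp/\|G_1\|$, where $G_1^\perp$ denotes $G_1$ rotated by $\pi/2$. The condition $\langle G_0,w\rangle=1$ then gives
\[
    \frac{1}{\|w\|}=\left|\left\langle G_0,\frac{G_1^\perp}{\|G_1\|}\right\rangle\right|.
\]
Conditionally on $G_1$, the inner product of the standard Gaussian $G_0$ with a fixed unit vector is $N(0,1)$. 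This conditional law does not depend on $G_1$, so $1/\|w\|\stackrel{\mathrm d}=|\xi|$, and it is independent of $A$ because $A$ is independent of $(G_0,G_1)$. Geometrically, $1/\|w\|$ is the distance from the origin to the line through $G_0$ parallel to $G_1$, which is an alternative way to see the same fact.

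Combining these steps gives $\rho_{\mathcal E}(\theta)\stackrel{\mathrm d}=\sqrt A\,|\xi|$ with $A$ and $\xi$ independent. The integral formula then follows by conditioning on $A=t$ and integrating against the arcsine density. The only point needing care is the a.s.\ invertibility of $L$ and the conditioning argument; neither poses a real obstacle. As a sanity check, the answer is consistent with Lemma~\ref{lem:support_statistics_K_E}, since $h_{\mathcal E}(\theta)\ge\rho_{\mathcal E}(\theta)$.
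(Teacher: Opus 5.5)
Your proof is correct and follows essentially the same route as the paper: after reducing to $\theta=0$, both arguments establish $\rho_{\mathcal E}(0)=\sqrt A\,|\langle G_0,V\rangle|$, with $V$ a unit vector orthogonal to $G_1$, and then condition on $G_1$. The only difference is cosmetic. You reach this identity through the gauge formula $\rho_{\mathcal E}(\theta)=\sqrt A/\|(L^{\mathsf T})^{-1}n_\theta\|$, which uses the a.s.\ invertibility of $L$. The paper reads it off directly from the constraint $\langle G_1,y\rangle=0$ for $y$ in the closed unit disk.
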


\begin{proof}
By rotational invariance, it is enough to consider $\theta=0$. Recall that
\[
    \mathcal E
    =
    \sqrt A\,L^{\mathsf T}\overline{\mathbb B}^2,
    \qquad
    Le_1=G_0,\quad Le_2=G_1,
\]
where $G_0,G_1$ are independent standard Gaussian vectors in
$\mathbb R^2$, independent of $A$. Thus
\[
    \mathcal E
    =
    \left\{
        \sqrt A\,
        \bigl(
            \langle G_0,y\rangle,
            \langle G_1,y\rangle
        \bigr)
        :
        y\in \overline{\mathbb B}^2
    \right\}.
\]

Let $V$ be a unit vector orthogonal to $G_1$. Then a point $(r,0)$,
where $r\geq0$, belongs to $\mathcal E$ if and only if there exists
$y\in \overline{\mathbb B}^2$ such that
\[
    \sqrt A\,\langle G_0,y\rangle=r,
    \qquad
    \langle G_1,y\rangle=0.
\]
Since the vectors in $\overline{\mathbb B}^2$ orthogonal to $G_1$ are precisely
$sV$, $|s|\leq1$, it follows that
\[
    \rho_{\mathcal E}(0)
    =
    \sqrt A\,|\langle G_0,V\rangle|.
\]

Conditionally on $G_1$, the unit vector $V$ is fixed, while $G_0$ is an
independent standard Gaussian vector. Hence $\langle G_0,V\rangle\sim N(0,1)$,
and its conditional distribution does not depend on $G_1$. Moreover,
$A$ is independent of $G_0$ and $G_1$. Consequently,
\[
    \rho_{\mathcal E}(0)
    \stackrel{\mathrm d}=
    \sqrt A\,|\xi|,
\]
where $\xi\sim N(0,1)$ is independent of $A$.

Finally, the distribution of $\mathcal E$ is rotationally invariant,
so the same identity holds for every $\theta\in\mathbb R$. The formula
for the tail probability follows by conditioning on $A$.
\end{proof}

Combining the preceding results yields the following distributional representation of the radial function of $\mathcal K$, together with a sequence of equivalent representations involving the support-function derivatives of $\mathcal K$ and $\mathcal E$.

\begin{proposition}[Distributional identities for the radial function and support function derivative]\label{prop:radial_support_derivative_distribution}
For every fixed $\theta\in\mathbb R$,
\[
    |h_{\mathcal K}'(\theta)|
    \stackrel{\mathrm d}=
    |h_{\mathcal E}'(\theta)|
    \stackrel{\mathrm d}=
    \sqrt A\,|\xi|
    \stackrel{\mathrm d}=
    \rho_{\mathcal E}(\theta)
    \stackrel{\mathrm d}=
    \rho_{\mathcal K}(\theta),
\]
where $A$ has the arcsine distribution on $(0,1)$,
$\xi\sim N(0,1)$, and $A$ and $\xi$ are independent.
\end{proposition}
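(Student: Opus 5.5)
The chain of identities can be assembled almost entirely from the lemmas already proved in this section, so the plan is to verify each link separately.

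\emph{First and second links.} By Lemma~\ref{lem:support_statistics_K_E}, for every fixed $\theta$ the pairs $(h_{\mathcal K}(\theta),h_{\mathcal K}'(\theta))$ and $(h_{\mathcal E}(\theta),h_{\mathcal E}'(\theta))$ both have the law of $(\sqrt A\,R,\sqrt A\,\xi)$ with $A,R,\xi$ independent. Projecting onto the second coordinate and taking absolute values gives
\[
|h_{\mathcal K}'(\theta)|\stackrel{\mathrm d}=|h_{\mathcal E}'(\theta)|\stackrel{\mathrm d}=\sqrt A\,|\xi|.
\]

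\emph{Third link.} The identity $\sqrt A\,|\xi|\stackrel{\mathrm d}=\rho_{\mathcal E}(\theta)$ is exactly Lemma~\ref{lem:radial_function_gaussian_ellipse}.

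\emph{Fourth link.} For $\rho_{\mathcal E}(\theta)\stackrel{\mathrm d}=\rho_{\mathcal K}(\theta)$ I will apply Corollary~\ref{cor:radial_comparison_support_statistics} with $\mathcal Q_1=\mathcal K$ and $\mathcal Q_2=\mathcal E$. This requires two things.

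First, $0\in\operatorname{int}\mathcal Q_i$ almost surely for both sets. For $\mathcal K$ this is stated in the introduction. For $\mathcal E$, note that $A>0$ a.s. and that $G_0,G_1$ are a.s. linearly independent, so $L^{\mathsf T}$ is a.s. invertible. Hence $\mathcal E$ is a.s. a nondegenerate ellipse centered at the origin, and $0$ lies in its interior.

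Second, the support statistics must agree in distribution for a.e.\ $\theta$. Lemma~\ref{lem:support_statistics_K_E} gives this for every $\theta$. One point needs care: in Lemma~\ref{lem:support_statistics_K_E}, $h'_{\mathcal K}(\theta)$ was identified through Danskin's formula outside a $\theta$-dependent null set. This is compatible with the convention $h':=0$ at points of nondifferentiability, because for fixed $\theta$ the derivative exists almost surely. The same holds for $\mathcal E$, since $U_\theta\neq0$ a.s. So the laws of the pairs in the corollary's sense really coincide.

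With both hypotheses checked, the corollary gives $\rho_{\mathcal K}(\theta)\stackrel{\mathrm d}=\rho_{\mathcal E}(\theta)$ for every $\theta$, and the four links together give the full chain.

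The only genuinely delicate step is this verification of the corollary's hypotheses: the a.s.\ interiority of the origin for $\mathcal E$, and the measure-theoretic consistency of the derivative convention. Everything else is a direct citation.
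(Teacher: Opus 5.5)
Your proposal is correct and follows the paper's proof exactly: the first two links come from Lemma~\ref{lem:support_statistics_K_E}, the third from Lemma~\ref{lem:radial_function_gaussian_ellipse}, and the last from Corollary~\ref{cor:radial_comparison_support_statistics} combined with Lemma~\ref{lem:support_statistics_K_E}. Your checks that $0\in\operatorname{int}\mathcal E$ almost surely and that the convention $h':=0$ is compatible with the Danskin identification for fixed $\theta$ are details the paper leaves implicit.
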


\begin{proof}
The first two distributional equalities follow from
Lemma~\ref{lem:support_statistics_K_E}. The third follows from
Lemma~\ref{lem:radial_function_gaussian_ellipse}, and the last one
from Corollary~\ref{cor:radial_comparison_support_statistics} combined with Lemma~\ref{lem:support_statistics_K_E}.
\end{proof}

\subsection{Proof of Theorem~\ref{theo:coverage_function_brownian_hull}}
We are now ready to complete the proof of Theorem~\ref{theo:coverage_function_brownian_hull}. Let $z\in\mathbb R^2$. By the definition of the radial function, rotational invariance and  Proposition~\ref{prop:radial_support_derivative_distribution},
\[
    \mathbb P(z\in\mathcal K)
    =
    \mathbb P(\rho_{\mathcal K} (0) \geq  \|z\|)
    =
    \mathbb P(\sqrt A\,|\xi|\geq \|z\|).
\]
Conditioning on $A$ gives
\[
    \mathbb P(z\in\mathcal K)
    =
    2\int_0^1
    \overline{\Phi}\left(\frac{\|z\|}{\sqrt t}\right)
    \frac{\dint t}{\pi\sqrt{t(1-t)}}.
\]
Finally, the substitution $t=\cos^2\theta$ yields
\[
    \mathbb P(z\in\mathcal K)
    =
    \frac{4}{\pi}
    \int_0^{\pi/2}
    \overline{\Phi}\left(\frac{\|z\|}{\cos\theta}\right)
    \,\dint\theta,
\]
which proves the theorem.

\begin{remark}[Another random set with the same coverage function as $\mathcal K$]\label{rem:another_set_same_radial}
We have seen that $\mathcal K$ and the arcsine-rescaled Gaussian ellipse
$\mathcal E$ have the same coverage function. Let us mention another random set with this property, this time a
non-convex one. Let $U$ be uniformly distributed on the unit circle
$\mathbb S^1\subset\mathbb R^2$, and let $\xi\sim N(0,1)$ be independent
of $U$. Let $\mathcal C$ be the union of the two closed disks having the segments
$[0,|\xi|U]$ and $[0,-|\xi|U]$ as diameters. Then
\[
    \mathbb P(z\in\mathcal C)
    =
    \mathbb P(z\in\mathcal K),
    \qquad z\in\mathbb R^2.
\]
Indeed, a direct computation gives $\rho_{\mathcal C}(\theta):= \max\{r\geq0:r n_\theta\in\mathcal C\} =|\xi|\,|\langle U,n_\theta\rangle|$.
Since $|\langle U,n_\theta\rangle|^2$ has the arcsine distribution on
$(0,1)$ and is independent of $\xi$,
\[
    \rho_{\mathcal C}(\theta)
    \stackrel{\mathrm d}=
    \sqrt A\,|\xi|
    \stackrel{\mathrm d}=
    \rho_{\mathcal K}(\theta),
\]
by Proposition~\ref{prop:radial_support_derivative_distribution}.
The asserted equality of the coverage functions follows.
\end{remark}

\subsection{Proof of Theorem~\ref{theo:coverage_function_brownian_bridge_hull}}

Write
$W_t^{\mathrm{br}}
    =
    (X_t^{\mathrm{br}},Y_t^{\mathrm{br}})$, $0\leq t\leq1$,
where $(X^{\mathrm{br}}_t)_{t\in [0,1]}$ and $(Y^{\mathrm{br}}_t)_{t\in [0,1]}$ are independent standard
one-dimensional Brownian bridges. We shall compare
$\mathcal K^{\mathrm{br}}$ with a suitable Gaussian ellipse, similarly to the
Brownian motion case.

Let $V$ be uniformly distributed on $(0,1)$, and let $G_0,G_1$ be
independent standard Gaussian vectors in $\mathbb R^3$, independent of $V$.
Let $L:\mathbb R^2\to\mathbb R^3$ be a random linear map defined by
\[
    Le_1=G_0,
    \qquad
    Le_2=G_1,
\]
where $e_1= (1,0)$ and $e_2 = (0,1)$. Let $\overline{\mathbb B}^3$ be the closed unit ball in $\R^3$, and set
\[
    \mathcal E^{\mathrm{br}}
    :=
    \sqrt{V(1-V)}\,L^{\mathsf T}\overline{\mathbb B}^3.
\]

\begin{lemma}[Equality in distribution of sufficient statistics]
\label{lem:support_statistics_bridge_gaussian_ellipse}
For every fixed $\theta\in\mathbb R$,
\[
    \bigl(
        h_{\mathcal K^{\mathrm{br}}}(\theta),
        h_{\mathcal K^{\mathrm{br}}}'(\theta)
    \bigr)
    \stackrel{\mathrm d}=
    \bigl(
        h_{\mathcal E^{\mathrm{br}}}(\theta),
        h_{\mathcal E^{\mathrm{br}}}'(\theta)
    \bigr).
\]
More precisely, their common distribution is the distribution of
\[
    \sqrt{V(1-V)}\,(R_3,\xi),
\]
where $V$, $R_3$, and $\xi$ are independent,
$\xi\sim N(0,1)$, and $R_3$ has the $\chi_3$ distribution.
\end{lemma}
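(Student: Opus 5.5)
The plan follows the proof of Lemma~\ref{lem:support_statistics_K_E} closely, with two parts.

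\emph{Part 1: the bridge hull.} Fix $\theta$ and set $B_t:=\langle W_t^{\mathrm{br}},n_\theta\rangle$ and $C_t:=\langle W_t^{\mathrm{br}},n_\theta^\perp\rangle$. By rotation invariance of the planar Brownian bridge, these are independent standard one-dimensional Brownian bridges. Let $\tau_\theta$ be the a.s.\ unique argmax of $B$ on $[0,1]$. Then $h_{\mathcal K^{\mathrm{br}}}(\theta)=B_{\tau_\theta}$. Danskin's formula (Lemma~\ref{lem:danskin_special_case_cos_sin}) gives, almost surely for fixed $\theta$, $h'_{\mathcal K^{\mathrm{br}}}(\theta)=C_{\tau_\theta}$.

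Two classical facts about the maximum of a Brownian bridge are needed. First, $\tau_\theta$ is uniform on $(0,1)$. Second, conditionally on $\tau_\theta=t$, the maximum $M=B_{\tau_\theta}$ has law $\sqrt{t(1-t)}\,R_3$ with $R_3\sim\chi_3$. One way to check this is to write the joint density of $(\tau,M)$ as the product of two first-passage/meander densities:
\[
\frac{m}{\sqrt{2\pi t^3}}e^{-m^2/(2t)}\cdot\frac{m}{\sqrt{2\pi (1-t)^3}}e^{-m^2/(2(1-t))}\cdot\sqrt{2\pi}\cdot 2 .
\]
This is proportional to $m^2e^{-m^2/(2t(1-t))}$ times a function of $t$ that integrates to the uniform density. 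Alternatively, one can cite Vervaat's transform, or Denisov's decomposition into two Brownian meanders.

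Since $C$ is independent of $B$, conditionally on $\tau_\theta=t$ the variable $C_t\sim N(0,t(1-t))$ is independent of $M$. Hence, conditionally on $\tau_\theta=t$, the pair has law $\sqrt{t(1-t)}(R_3,\xi)$. Integrating over the uniform $\tau_\theta$ gives the claimed law, with $V=\tau_\theta$.

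\emph{Part 2: the ellipse.} The support function is
\[
h_{\mathcal E^{\mathrm{br}}}(\theta)=\sqrt{V(1-V)}\,\|U_\theta\|,\qquad U_\theta=G_0\cos\theta+G_1\sin\theta .
\]
Set $V_\theta=-G_0\sin\theta+G_1\cos\theta$. Then $U_\theta$ and $V_\theta$ are independent standard Gaussian vectors in $\mathbb R^3$, and differentiating gives $h'=\sqrt{V(1-V)}\langle U_\theta/\|U_\theta\|,V_\theta\rangle$. Now $\|U_\theta\|\sim\chi_3$. Conditionally on $U_\theta$, the inner product is $N(0,1)$, which forces independence from $\|U_\theta\|$. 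Since $V$ is independent of $(G_0,G_1)$, this yields $\sqrt{V(1-V)}(R_3,\xi)$.

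The main obstacle is the first classical fact in Part 1: the joint law of the argmax and the maximum of the Brownian bridge. I would verify the density computation above, checking that the normalization gives exactly a uniform marginal and a $\chi_3$ shape scaled by $\sqrt{t(1-t)}$. The remaining steps transfer verbatim from the Brownian motion case.
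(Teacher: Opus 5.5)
Your proposal is correct and follows the paper's proof essentially step for step. Both use Danskin's formula to identify $h'_{\mathcal K^{\mathrm{br}}}(\theta)=C_{\tau_\theta}$, then the uniform law of the argmax with the conditional $\sqrt{t(1-t)}\,\chi_3$ law of the maximum, the independence of $C$, and the same Gaussian computation for $\mathcal E^{\mathrm{br}}$. Your explicit joint density of $(\tau_\theta,M)$ is correct: it is Shepp's trivariate density conditioned on $W_1=0$, and it gives exactly a uniform marginal and an $m^2e^{-m^2/(2t(1-t))}$ conditional shape, so it usefully fills in the step the paper only cites.
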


\begin{proof}
For fixed $\theta\in \R$, put
\[
    B_t^{(\theta)}
    :=
    X_t^{\mathrm{br}}\cos\theta+Y_t^{\mathrm{br}}\sin\theta,
    \qquad
    C_t^{(\theta)}
    :=
    -X_t^{\mathrm{br}}\sin\theta+Y_t^{\mathrm{br}}\cos\theta,
    \qquad t\in [0,1],
\]
and let $\tau_\theta$ be the a.s.\ unique time at which $(B^{(\theta)}_t)_{t\in [0,1]}$ attains its
maximum. Then $(B^{(\theta)}_t)_{t\in [0,1]}$ and $(C^{(\theta)}_t)_{t\in [0,1]}$ are independent standard
Brownian bridges and
\[
    h_{\mathcal K^{\mathrm{br}}}(\theta)
    =
    B_{\tau_\theta}^{(\theta)},
    \qquad
    h_{\mathcal K^{\mathrm{br}}}'(\theta)
    =
    C_{\tau_\theta}^{(\theta)}.
\]
The second identity follows from Lemma~\ref{lem:danskin_special_case_cos_sin}. The classical joint law of the maximum and its location for a Brownian
bridge implies that $\tau_\theta$ is uniform on $(0,1)$ and, conditionally
on $\tau_\theta=t$,
\[
    B_{\tau_\theta}^{(\theta)}
    \stackrel{\mathrm d}=
    \sqrt{t(1-t)}\,R_3,
\]
where $R_3$ has the $\chi_3$ distribution.  This result follows from the joint density of the maximum, its location,
and the endpoint of Brownian motion given in~\cite{Shepp1979}, after
conditioning on the endpoint being zero. Moreover,
\[
    C_{\tau_\theta}^{(\theta)}
    \mid\{\tau_\theta=t\}
    \sim N(0,t(1-t)),
\]
and the processes $(B^{(\theta)}_t)_{t\in [0,1]}$ and $(C^{(\theta)}_t)_{t\in [0,1]}$ are independent. Hence
\[
    \bigl(
        h_{\mathcal K^{\mathrm{br}}}(\theta),
        h_{\mathcal K^{\mathrm{br}}}'(\theta)
    \bigr)
    \stackrel{\mathrm d}=
    \sqrt{V(1-V)}\,(R_3,\xi).
\]
On the other hand,
\[
    h_{\mathcal E^{\mathrm{br}}}(\theta)
    =
    \sqrt{V(1-V)}
    \|G_0\cos\theta+G_1\sin\theta\|.
\]
The same argument as in Part~2 of  the proof of
Lemma~\ref{lem:support_statistics_K_E} shows that
\[
    \bigl(
        h_{\mathcal E^{\mathrm{br}}}(\theta),
        h_{\mathcal E^{\mathrm{br}}}'(\theta)
    \bigr)
    \stackrel{\mathrm d}=
    \sqrt{V(1-V)}\,(R_3,\xi),
\]
which proves the claim.
\end{proof}

By Corollary~\ref{cor:radial_comparison_support_statistics}, $\rho_{\mathcal K^{\mathrm{br}}}(\theta)$ has the same distribution as  $\rho_{\mathcal E^{\mathrm{br}}}(\theta)$, for every  $\theta\in\mathbb R$. It remains to identify the law of $\rho_{\mathcal E^{\mathrm{br}}}(\theta)$.

\begin{lemma}[Radial function of the Gaussian ellipse]
\label{lem:radial_function_bridge_gaussian_ellipse}
For every fixed $\theta\in\mathbb R$,
\[
    \rho_{\mathcal E^{\mathrm{br}}}(\theta)
    \stackrel{\mathrm d}=
    \sqrt{V(1-V)}\,R_2,
\]
where $V\sim\operatorname{Unif}(0,1)$ and $R_2$ is an independent
$\chi_2$ random variable.
\end{lemma}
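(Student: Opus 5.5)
We follow the proof of Lemma~\ref{lem:radial_function_gaussian_ellipse}, now with a $3$-dimensional auxiliary space. Since the law of $\mathcal E^{\mathrm{br}}$ is rotationally invariant, it suffices to treat $\theta=0$. To see the invariance, note that rotating $\mathcal E^{\mathrm{br}}$ by an orthogonal $2\times2$ matrix $O$ replaces the pair $(G_0,G_1)$ by linear combinations of $G_0,G_1$ with coefficients given by the rows of $O$. By rotational invariance of the joint Gaussian law, the new pair again consists of independent standard Gaussian vectors in $\mathbb R^3$.

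Next we write the ellipse explicitly as
\[
    \mathcal E^{\mathrm{br}}
    =
    \left\{
        \sqrt{V(1-V)}\,
        \bigl(\langle G_0,y\rangle,\langle G_1,y\rangle\bigr)
        :
        y\in\overline{\mathbb B}^3
    \right\}.
\]
A point $(r,0)$ with $r\geq0$ lies in $\mathcal E^{\mathrm{br}}$ if and only if there is some $y\in\overline{\mathbb B}^3$ with $y\perp G_1$ and $\sqrt{V(1-V)}\,\langle G_0,y\rangle=r$. Almost surely $G_1\neq0$, so $G_1^\perp$ is a $2$-dimensional subspace $H$. Maximizing $\langle G_0,y\rangle$ over unit-ball vectors $y\in H$ gives $\|P_HG_0\|$, where $P_H$ denotes orthogonal projection onto $H$. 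The attainable values form the full symmetric interval $[-\|P_HG_0\|,\|P_HG_0\|]$. Therefore
\[
    \rho_{\mathcal E^{\mathrm{br}}}(0)=\sqrt{V(1-V)}\,\|P_HG_0\|.
\]

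It remains to identify the law of $\|P_HG_0\|$. Condition on $G_1$; then $H$ is a fixed $2$-plane. Since $G_0$ is an independent standard Gaussian vector in $\mathbb R^3$, the projection $P_HG_0$ is standard Gaussian in $H\cong\mathbb R^2$, so $\|P_HG_0\|\sim\chi_2$. This conditional law does not depend on $G_1$, and $V$ is independent of $(G_0,G_1)$. Hence $\rho_{\mathcal E^{\mathrm{br}}}(0)\stackrel{\mathrm d}=\sqrt{V(1-V)}\,R_2$ with $R_2\sim\chi_2$ independent of $V$.

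The only step needing some care is the geometric reduction to $\|P_HG_0\|$, specifically checking that the attainable values of $\langle G_0,y\rangle$ fill the whole interval and not just its endpoint. This holds because $H\cap\overline{\mathbb B}^3$ is a closed disk and $y\mapsto\langle G_0,y\rangle$ is a linear functional on it, so the image is an interval symmetric about $0$. This replaces the segment $\{sV:|s|\le1\}$ used in the two-dimensional case.
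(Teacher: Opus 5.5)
Your proof is correct and follows the paper's argument. You reduce to $\theta=0$ by rotational invariance, identify $\rho_{\mathcal E^{\mathrm{br}}}(0)=\sqrt{V(1-V)}\,\|P_{G_1^\perp}G_0\|$ as in the two-dimensional ellipse lemma, and note that, conditionally on $G_1$, the projection of $G_0$ onto the plane $G_1^\perp$ is a standard planar Gaussian vector, so its norm is $\chi_2$ and independent of $V$. You only spell out details the paper leaves implicit: the rotational invariance, and the fact that the image of the disk $G_1^\perp\cap\overline{\mathbb B}^3$ under $y\mapsto\langle G_0,y\rangle$ is the whole symmetric interval.
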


\begin{proof}
By rotational invariance, it suffices to take $\theta=0$. As in the proof
of Lemma~\ref{lem:radial_function_gaussian_ellipse},
\[
    \rho_{\mathcal E^{\mathrm{br}}}(0)
    =
    \sqrt{V(1-V)}
    \left\|
        \operatorname{proj}_{G_1^\perp}G_0
    \right\|.
\]
Conditionally on $G_1$, the space $G_1^\perp\subset\mathbb R^3$ is
two-dimensional, and the projection of the independent standard Gaussian
vector $G_0$ onto this space is a standard two-dimensional Gaussian vector.
Its norm therefore has the $\chi_2$ distribution.
\end{proof}

\begin{proof}[Completion of the proof of
Theorem~\ref{theo:coverage_function_brownian_bridge_hull}]
Let $r=\|z\|$. By Lemma~\ref{lem:radial_function_bridge_gaussian_ellipse} and the comparison above,
\[
\begin{aligned}
    \mathbb P(z\in\mathcal K^{\mathrm{br}})
    =
    \mathbb P(\rho_{\mathcal E^{\mathrm{br}}}(0) \geq r)
    =
    \mathbb P\left(
        \sqrt{V(1-V)}\,R_2\geq r
    \right)
=
    \int_0^1
    \exp\left\{
        -\frac{r^2}{2t(1-t)}
    \right\}
    \,\dint t,
\end{aligned}
\]
since $\mathbb P(R_2\geq x)=\eee^{-x^2/2}$. For the representation in terms of $K_0$ and $K_1$, set
\begin{equation}\label{eq:F_def_bridge_function}
    F(x)
    :=
    \int_0^1
    \exp\left\{
        -\frac{x}{2t(1-t)}
    \right\}
    \,\dint t,
    \qquad x\geq 0.
\end{equation}
For $x>0$, differentiating under the integral sign and then using the substitution
$v=\log(t/(1-t))$, we obtain
\[
    F'(x)
    =
    -\frac12
    \int_0^1
    \frac{1}{t(1-t)}
    \exp\left\{
        -\frac{x}{2t(1-t)}
    \right\}
    \,\dint t
    =
    -\frac12
    \int_{-\infty}^{\infty}
    \exp\{-x(1+\cosh v)\}
    \,\dint v
    =
    -\eee^{-x}K_0(x).
\]
Using $K_0'(x)=-K_1(x)$ and  $K_1'(x)=-K_0(x)-\frac{1}{x}K_1(x)$, see \cite[Eq.~10.29.2 and Eq.~10.29.3]{DLMF}, we obtain
\[
    \frac{\dint}{\dint x}
    \left[
        x\eee^{-x}\bigl(K_1(x)-K_0(x)\bigr)
    \right]
    =
    -\eee^{-x}K_0(x) = F'(x).
\]
For each fixed $\lambda$, the standard large-argument asymptotic
\[
    K_\lambda(x)
    \sim
    \sqrt{\frac{\pi}{2x}}\,\eee^{-x},
    \qquad x\to\infty,
\]
holds; see \cite[Eq.~10.40.2]{DLMF}. Hence $x\eee^{-x}(K_1(x)-K_0(x)) \to 0$ as $x\to\infty$. By dominated convergence, $F(x) \to 0$ as $x\to\infty$. Since both functions tend to $0$ as $x\to\infty$ and have the same derivative,  they must coincide. Hence
\[
    F(x)
    =
    x\eee^{-x}\bigl(K_1(x)-K_0(x)\bigr), \qquad x>0.
\]
Taking $x=\|z\|^2$ completes the proof.
\end{proof}

\begin{remark}
The preceding proof also yields simple distributional representations for
the support function, its derivative, and the radial function, both for $\mathcal K^{\mathrm{br}}$ and $\mathcal E^{\mathrm{br}}$. For every
fixed $\theta\in\mathbb R$,
\[
    h_{\mathcal K^{\mathrm{br}}}(\theta)
    \stackrel{\mathrm d}=
    h_{\mathcal E^{\mathrm{br}}}(\theta)
    \stackrel{\mathrm d}=
    \sqrt{V(1-V)}\,R_3,
\]
\[
    |h_{\mathcal K^{\mathrm{br}}}'(\theta)|
    \stackrel{\mathrm d}=
    |h_{\mathcal E^{\mathrm{br}}}'(\theta)|
    \stackrel{\mathrm d}=
    \sqrt{V(1-V)}\,R_1,
\]
\[
    \rho_{\mathcal K^{\mathrm{br}}}(\theta)
    \stackrel{\mathrm d}=
    \rho_{\mathcal E^{\mathrm{br}}}(\theta)
    \stackrel{\mathrm d}=
    \sqrt{V(1-V)}\,R_2,
\]
where $V\sim\operatorname{Unif}(0,1)$ and $R_k$ denotes a $\chi_k$
random variable independent of $V$. Thus the three quantities correspond,
after the same random scaling, to the $\chi_3$, $\chi_1$, and $\chi_2$
distributions, respectively. In particular, unlike in the Brownian motion
case, $|h_{\mathcal K^{\mathrm{br}}}'(\theta)|$ and
$\rho_{\mathcal K^{\mathrm{br}}}(\theta)$ do not have the same distribution.
\end{remark}

\section{Proofs of the consequences and asymptotics}\label{sec:proofs_corollaries}

In this section we prove the results stated in
Section~\ref{subsec:consequences_asymptotics}, in the order in which they
appear. Recall that, for any unit vector $u\in\mathbb R^2$,
\[
    p_{\mathcal K}(r)
    :=
    \mathbb P(ru\in\mathcal K),
    \qquad
    p_{\mathcal K^{\mathrm{br}}}(r)
    :=
    \mathbb P(ru\in\mathcal K^{\mathrm{br}}),
    \qquad r\geq0.
\]
By Theorems~\ref{theo:coverage_function_brownian_hull}
and~\ref{theo:coverage_function_brownian_bridge_hull},
\[
    p_{\mathcal K}(r)
    =
    \frac{4}{\pi}
    \int_0^{\pi/2}
    \overline{\Phi}\left(\frac{r}{\cos\theta}\right)
    \,\dint\theta,
    \qquad
    p_{\mathcal K^{\mathrm{br}}}(r)
    =
    \int_0^1
    \exp\left\{
        -\frac{r^2}{2t(1-t)}
    \right\}
    \,\dint t,
    \qquad r\geq0.
\]

\subsection{Proof of Proposition~\ref{prop:radial_derivative_coverage_function}}
We consider $\mathcal K$ first. Differentiation under the integral sign gives
\[
\begin{aligned}
    p_{\mathcal K}'(r)
    &=
    -\frac{4}{\pi\sqrt{2\pi}}
    \int_0^{\pi/2}
    \frac{1}{\cos\theta}
    \exp\left\{-\frac{r^2}{2\cos^2\theta}\right\}
    \,\dint\theta
\\
    &=
    -\frac{4}{\pi\sqrt{2\pi}}\,
    \eee^{-r^2/2}
    \int_0^\infty
    \frac{\eee^{-r^2x^2/2}}{\sqrt{1+x^2}}
    \,\dint x
\\
    &=
    -\frac{2}{\pi\sqrt{2\pi}}\,
    \eee^{-r^2/4}
    \int_0^\infty
    \eee^{-(r^2/4)\cosh v}
    \,\dint v
    \\
    &=
    -\frac{\sqrt{2}}{\pi^{3/2}}
    \eee^{-r^2/4}
    K_0\left(\frac{r^2}{4}\right),
\end{aligned}
\]
where we used successively $x=\tan\theta$ and then
$x=\sinh(v/2)$. The last step follows from the integral representation $K_0(x)=\int_0^\infty \eee^{-x\cosh v}\,\dint v$, $x>0$.

For the Brownian bridge, Theorem~\ref{theo:coverage_function_brownian_bridge_hull}
gives $p_{\mathcal K^{\mathrm{br}}}(r) =F(r^2)$, where $F(x)$ is the function defined in~\eqref{eq:F_def_bridge_function}.
As shown in the proof of Theorem~\ref{theo:coverage_function_brownian_bridge_hull},
$F'(x)=-\eee^{-x}K_0(x)$.
Hence, by the chain rule,
\[
    p_{\mathcal K^{\mathrm{br}}}'(r)
    =
    2rF'(r^2)
    =
    -2r\,\eee^{-r^2}K_0(r^2),
    \qquad r>0.
\]

\subsection{Proof of Proposition~\ref{prop:asympt_large_T}}
By Brownian scaling,
\[
    \mathbb P(z\notin\mathcal K_T)
    =
    1-p_{\mathcal K}\left(\frac{\|z\|}{\sqrt T}\right),
    \qquad
    \mathbb P(z\notin\mathcal K_T^{\mathrm{br}})
    =
    1-p_{\mathcal K^{\mathrm{br}}}\left(\frac{\|z\|}{\sqrt T}\right).
\]
As $T\to\infty$, it remains to determine the asymptotics of
$p_{\mathcal K}(r)$ and $p_{\mathcal K^{\mathrm{br}}}(r)$ as $r\downarrow 0$. As $x\downarrow0$, we have $K_0(x)=\log\frac1x+O(1)$, see Eqs.~(10.31.2) and~(10.25.2) in~\cite{DLMF}. Hence
Proposition~\ref{prop:radial_derivative_coverage_function} yields, as
$r\downarrow0$,
\[
    -p_{\mathcal K}'(r)
    =
    \frac{2\sqrt2}{\pi^{3/2}}\log\frac1r+O(1),
    \qquad
    -p_{\mathcal K^{\mathrm{br}}}'(r)
    =
    4r\log\frac1r+O(r).
\]
Since $p_{\mathcal K}(0)=p_{\mathcal K^{\mathrm{br}}}(0)=1$, integration gives
\[
    1-p_{\mathcal K}(r)
    =
    \frac{2\sqrt2}{\pi^{3/2}}\,r\log\frac1r+O(r),
    \qquad
    1-p_{\mathcal K^{\mathrm{br}}}(r)
    =
    2r^2\log\frac1r+O(r^2),
\]
as $r\downarrow 0$. Taking $r=\|z\|/\sqrt T$ proves
\[
    \mathbb P(z\notin\mathcal K_T)
    \sim
    \frac{\sqrt2\,\|z\|}{\pi^{3/2}\sqrt T}\log T,
    \qquad
    \mathbb P(z\notin\mathcal K_T^{\mathrm{br}})
    \sim
    \frac{\|z\|^2}{T}\log T, \qquad T\to\infty.
\]

\subsection{Proof of Proposition~\ref{prop:asympt_small_T}}
By Brownian scaling,
\[
    \mathbb P(z\in\mathcal K_T)
    =
    p_{\mathcal K}\left(\frac{\|z\|}{\sqrt T}\right),
    \qquad
    \mathbb P(z\in\mathcal K_T^{\mathrm{br}})
    =
    p_{\mathcal K^{\mathrm{br}}}\left(\frac{\|z\|}{\sqrt T}\right).
\]
As $T\downarrow0$, it remains to determine the large-$r$ asymptotics of
$p_{\mathcal K}(r)$ and $p_{\mathcal K^{\mathrm{br}}}(r)$. Using $K_0(x)\sim \sqrt{\frac {\pi}{2x}}\,\eee^{-x}$ as  $x\to\infty$, see Eq.~(10.25.3) or Eq.~(10.40.2) in~\cite{DLMF}, Proposition~\ref{prop:radial_derivative_coverage_function} yields
\begin{align*}
    -p_{\mathcal K}'(r)
    &=
    \frac{\sqrt{2}}{\pi^{3/2}}
    \eee^{-r^2/4}
    K_0\left(\frac{r^2}{4}\right)
    \sim
    \frac{2}{\pi r}\,\eee^{-r^2/2},
    \\
    -p_{\mathcal K^{\mathrm{br}}}'(r)
    &=
    2r\,\eee^{-r^2}K_0(r^2) \sim \sqrt{2\pi}\,\eee^{-2r^2},
\end{align*}
as $r\to\infty$.  Since $p_{\mathcal K}(r)\to0$ and $p_{\mathcal K^{\mathrm{br}}}(r)\to0$ as $r\to\infty$, we obtain
\begin{align*}
    p_{\mathcal K}(r)
    &=
    \int_r^\infty -p_{\mathcal K}'(s)\,\dint s
    \sim
    \frac{2}{\pi}
    \int_r^\infty \frac1s\,\eee^{-s^2/2}\,\dint s
    \sim
    \frac{2}{\pi r^2}\,\eee^{-r^2/2},
    \\
        p_{\mathcal K^{\mathrm{br}}}(r)
    &=
    \int_r^\infty -p_{\mathcal K^{\mathrm{br}}}'(s)\,\dint s
    \sim
    \sqrt{2\pi}\int_r^\infty \eee^{-2s^2}\,\dint s
    \sim
    \sqrt{\frac{\pi}{8r^2}}\,\eee^{-2r^2},
\end{align*}
as $r\to\infty$.
Taking $r=\|z\|/\sqrt T$ gives the claimed formulas
\[
    \mathbb P(z\in\mathcal K_T)
    \sim
    \frac{2T}{\pi\|z\|^2}
    \eee^{-\|z\|^2/(2T)},
    \qquad
    \mathbb P(z\in\mathcal K_T^{\mathrm{br}})
    \sim
    \sqrt{\frac{\pi T}{8\|z\|^2}}\,
    \eee^{-2\|z\|^2/T}, \qquad T\downarrow 0.
\]

\subsection{Proof of Proposition~\ref{prop:expected_radial_moments}}
Let $\mathcal Q$ denote either $\mathcal K_T$ or $\mathcal K_T^{\mathrm{br}}$.
By polar coordinates and rotational invariance,
\begin{equation}\label{eq:E_int_x_alpha}
    \mathbb E\left[\int_{\mathcal Q}\|x\|^\alpha\,\dint x\right]
    =
    \frac{2\pi}{\alpha+2}\,
    \mathbb E\left[\rho_{\mathcal Q}(0)^{\alpha+2}\right].
\end{equation}
Consider $\mathcal K_T$ first. By Brownian scaling and
Proposition~\ref{prop:radial_support_derivative_distribution},
$\rho_{\mathcal K_T}(0)$ has the same law as $\sqrt{TA}\,|\xi|$,
where $A$ and $\xi$ are independent,
$A\sim\operatorname{Beta}(1/2,1/2)$ and $\xi\sim N(0,1)$. Hence
\[
    \mathbb E\bigl[\rho_{\mathcal K_T}(0)^{\alpha+2}\bigr]
    =
    T^{(\alpha+2)/2}
    \frac{\Gamma\left(\frac{\alpha+3}{2}\right)}
         {\sqrt\pi\,\Gamma\left(\frac{\alpha+4}{2}\right)}
    \,
    \frac{2^{(\alpha+2)/2}
          \Gamma\left(\frac{\alpha+3}{2}\right)}
         {\sqrt\pi}.
\]
Substitution into~\eqref{eq:E_int_x_alpha} gives the claimed formula for $\mathcal K_T$.
For the Brownian bridge, the preceding results and Brownian scaling give
\[
    \rho_{\mathcal K_T^{\mathrm{br}}}(0)
    \stackrel{\mathrm d}=
    \sqrt{TV(1-V)}\,R_2,
\]
where $V\sim\operatorname{Unif}(0,1)$ and $R_2\sim\chi_2$ are independent.
Therefore
\begin{align*}
    \mathbb E\bigl[\rho_{\mathcal K_T^{\mathrm{br}}}(0)^{\alpha+2}\bigr]
    &=
    T^{(\alpha+2)/2}
    \mathbb E\bigl[(V(1-V))^{(\alpha+2)/2}\bigr]
    \mathbb E R_2^{\alpha+2}
    \\
    &=
    T^{(\alpha+2)/2}
    \frac{\Gamma\left(\frac{\alpha+4}{2}\right)^2}
         {\Gamma(\alpha+4)}
    \,
    2^{(\alpha+2)/2}
    \Gamma\left(\frac{\alpha+4}{2}\right).
\end{align*}
Substitution into~\eqref{eq:E_int_x_alpha} gives the claimed formula for
$\mathcal K_T^{\mathrm{br}}$.

\appendix
\section{Auxiliary results}\label{sec:standard_facts}
In this appendix, we collect several standard results used in the proofs above, including the change-of-variables formula and Danskin's theorem.

\begin{lemma}[Tangent lines to a convex body]\label{lem:tangent_lines}
Let $K\subset\mathbb R^2$ be a compact convex set with nonempty interior.
\begin{enumerate}
    \item If $z\notin K$, then there are exactly two supporting lines of $K$ passing through $z$.
    \item If $z\in\operatorname{int} K$, then there is no supporting line of $K$ passing through $z$.
\end{enumerate}
\end{lemma}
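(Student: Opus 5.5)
The argument is elementary convex geometry: I parametrize the lines through the point by angle and look at how each one meets $K$. A supporting line of $K$ here means a line $\ell$ with $\ell\cap K\neq\emptyset$ such that $K$ lies in one of the two closed half-planes bounded by $\ell$.

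\emph{Part (2).} Let $z\in\operatorname{int}K$ and let $\ell$ be any line through $z$. Choose $\varepsilon>0$ with $B(z,\varepsilon)\subset K$. The disk $B(z,\varepsilon)$ contains points strictly on both sides of $\ell$. So $K$ is not contained in either closed half-plane, and $\ell$ is not a supporting line.

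\emph{Part (1), setup.} Let $z\notin K$ and translate so that $z=0$. For $\phi\in\mathbb R$ write $u_\phi=(\cos\phi,\sin\phi)$. A line through $0$ has the form $\ell_\phi=\{x:\langle x,u_\phi\rangle=0\}$, and $\ell_\phi=\ell_{\phi+\pi}$. Define $g(\phi):=h_K(u_\phi)=\max_{x\in K}\langle x,u_\phi\rangle$. This is continuous and $2\pi$-periodic. The line $\ell_\phi$ supports $K$ with $K$ on the side $\{\langle x,u_\phi\rangle\le0\}$ exactly when $g(\phi)=0$. Hence the supporting lines through $0$ correspond to the zeros of $g$ modulo $2\pi$. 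Distinct zeros give distinct lines, because $g(\phi)=g(\phi+\pi)=0$ would force $K\subset\ell_\phi$, contradicting nonempty interior.

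\emph{Key step: sign structure of $g$.} Since $0\notin K$, strict separation gives some $\phi_0$ with $\max_{x\in K}\langle x,u_{\phi_0}\rangle<0$, so $g(\phi_0)<0$. Fix an interior point $c\in\operatorname{int}K$; then $c\neq 0$.
\begin{itemize}
\item The set $N=\{\phi:g(\phi)<0\}$ equals $\{\phi:\langle x,u_\phi\rangle<0\ \forall x\in K\}$. In the angular variable this is the open dual cone of $\operatorname{cone}(K)$.
\item The cone $\operatorname{cone}(K)$ is closed and pointed, since $0\notin K$ and $K$ is compact. It has nonempty interior and is contained in an open half-plane, so it is a sector $\{t u_\psi:t\ge0,\ \psi\in[\alpha,\beta]\}$ with $0<\beta-\alpha<\pi$.
\item Its open dual is then an open arc of angular length $\pi-(\beta-\alpha)\in(0,\pi)$. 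Similarly $P=\{\phi:g(\phi)>0\}$ is the complement of the closed dual arc, an open arc of length $\pi+(\beta-\alpha)$.
\item The zero set $\{g=0\}$ therefore consists of exactly the two endpoints of the arc $N$ modulo $2\pi$.
\end{itemize}
These two angles differ by less than $\pi$, so they give two distinct lines. This yields exactly two supporting lines.

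The main obstacle is making rigorous that $\operatorname{cone}(K)$ is a closed sector of opening strictly between $0$ and $\pi$, and that the zero set of $g$ is exactly the boundary of the dual arc. I expect to do this concretely. The map $x\mapsto\arg x$ is continuous on $K$, lifted to a branch defined on the open half-plane $\{\langle x,u_{\phi_0+\pi}\rangle>0\}\supset K$. Its image is a compact interval $[\alpha,\beta]$ by connectedness and compactness of $K$. Nonempty interior gives $\alpha<\beta$, and containment in an open half-plane gives $\beta-\alpha<\pi$. Then $\langle x,u_\phi\rangle=\|x\|\cos(\arg x-\phi)$, so $g(\phi)\le0$ if and only if $|\psi-\phi|\ge\pi/2$ for all $\psi\in[\alpha,\beta]$ (mod $2\pi$). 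Also $g(\phi)=0$ if and only if equality holds for $\psi=\alpha$ or $\psi=\beta$, which happens at $\phi=\beta+\pi/2$ and $\phi=\alpha-\pi/2$ modulo $2\pi$. These are the two lines through $z$ along the rays of direction $\alpha$ and $\beta$.
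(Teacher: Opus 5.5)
Your proof is correct and follows the same route as the paper. Part (2) is identical, and in Part (1) the paper likewise uses that $\operatorname{pos}(K-z)$ is a closed convex sector bounded by two rays, whose lines are exactly the supporting lines through $z$; your zero-set analysis of $\phi\mapsto h_K(u_\phi)$ with the explicit $\arg$-branch computation just supplies the verification of that last claim, which the paper states without detail.
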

\begin{proof}
If $z\notin K$, then the cone $\operatorname{pos}(K-z)$ is a proper closed convex cone with nonempty interior, whose boundary in $\mathbb R^2$ consists of exactly two rays. The two lines through $z$ generated by these boundary rays are precisely the supporting lines of $K$ passing through $z$. If $z\in\operatorname{int}K$, every line through $z$ has points of $K$ on both sides and therefore cannot be a supporting line.
\end{proof}

The next result is a one-dimensional case of the area formula (or change-of-variables
formula); see, for example, \cite[Theorem~3.9, p.~122]{EvansGariepy2015}.
The locally Lipschitz case stated below follows by exhaustion by compact subintervals.

\begin{lemma}[Change-of-variables formula]\label{lem:one_dimensional_coarea_formula}
Let $I\subset\mathbb R$ be an interval and let $g:I\to\mathbb R$
be locally Lipschitz. (So $g'$ exists Lebesgue-a.e.) Then, for every nonnegative Borel function
$f:\mathbb R\to[0,\infty]$,
\[
    \int_I f(g(x))\,|g'(x)|\,\dint x
    =
    \int_{\mathbb R} f(y)\,N_g(y)\,\dint y,
\]
where
\[
    N_g(y)
    :=
    \#\{x\in I:g(x)=y\}.
\]
Both sides are allowed to take the value $+\infty$.
The interval $I$ may be open, closed, half-open, bounded or unbounded.
\end{lemma}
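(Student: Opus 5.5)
The plan is to reduce to the compact Lipschitz case, where the formula is the classical one-dimensional area formula \cite[Theorem~3.9]{EvansGariepy2015}, and then exhaust $I$ by compact subintervals and pass to the limit by monotone convergence.

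\emph{Compact case.} Let $J=[a,b]\subset I$ and let $g$ be Lipschitz on $J$. By Rademacher's theorem (or simply absolute continuity in one dimension), $g'$ exists a.e. on $J$. The area formula for Lipschitz maps $\mathbb R\to\mathbb R$ states that
\[
    \int_J f(g(x))\,|g'(x)|\,\dint x=\int_{\mathbb R} f(y)\,N_{g|_J}(y)\,\dint y
\]
for indicator functions $f=\ind_B$ of Borel sets $B$. This is exactly the statement $\int_{J\cap g^{-1}(B)}|g'|=\int_B N_{g|_J}$. I would cite it directly. If a self-contained argument is wanted, one first proves it for intervals $B$, using absolute continuity of $g$ on $J$ and a decomposition of $\{x: g(x)\in B\}$ into countably many intervals (up to the set where $g'=0$, whose image is null). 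Measurability of $N_{g|_J}$ is part of the cited theorem. From indicators one extends to nonnegative simple functions by linearity, and then to nonnegative Borel $f$ by monotone convergence applied to both sides.

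\emph{Exhaustion.} Choose compact intervals $J_1\subset J_2\subset\cdots$ with $\bigcup_n J_n=I$; this works for every type of interval $I$. Since $g$ is locally Lipschitz, it is Lipschitz on each $J_n$, so the compact case applies. The integrand $f(g)|g'|\ind_{J_n}$ increases to $f(g)|g'|\ind_I$. Likewise, $N_{g|_{J_n}}(y)=\#\{x\in J_n:g(x)=y\}$ increases pointwise to $N_g(y)$, since any finite subset of the preimage eventually lies in some $J_n$, and an infinite preimage gives $+\infty$ in the limit. Monotone convergence on both sides then yields the identity, with values in $[0,\infty]$ allowed.

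The only step with real content is the compact Lipschitz area formula, in particular the measurability of $N_g$ and the null image of the critical set. I expect this to be the main obstacle if one insists on a self-contained proof; otherwise it is a citation, and the remaining steps are routine monotone-convergence bookkeeping.
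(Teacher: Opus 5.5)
Your proposal is correct and follows the paper's approach: the paper also cites the one-dimensional area formula of Evans--Gariepy (Theorem~3.9) for the compact Lipschitz case and then passes to the locally Lipschitz case by exhausting $I$ with compact subintervals. You carry out that exhaustion explicitly, applying monotone convergence to both sides and using that $N_{g|_{J_n}}\uparrow N_g$ pointwise.
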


The next result is a standard consequence of Danskin's theorem; see, for example, Theorem~4.13 and Remark~4.14 in~\cite{BonnansShapiro2000} for a more general version. For completeness, we provide a proof.
\begin{lemma}[Danskin's theorem, unique-maximizer version]
\label{lem:danskin}
Let $I\subset\mathbb R$ be an open interval, let $K$ be a compact
metric space, and let $F:I\times K\to\mathbb R$ be a continuous function. Define
\[
    H(\theta):=\max_{t\in K}F(\theta,t),
    \qquad \theta\in I.
\]
Suppose that the partial derivative
$\partial_\theta F(\theta,t)$ exists for every $(\theta,t)\in I\times K$
and is continuous on $I\times K$.  Fix $\theta_0\in I$ and suppose that $t\mapsto F(\theta_0,t)$
attains its maximum over $K$ at a unique point $t_0$.
Then $H$ is differentiable at $\theta_0$, and
\[
    H'(\theta_0)
    =
    \partial_\theta F(\theta_0,t_0).
\]
\end{lemma}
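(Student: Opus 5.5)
The plan is a direct one-sided estimate of the difference quotient of $H$ at $\theta_0$, in two halves: a lower bound that uses the fixed point $t_0$, and an upper bound that uses near-maximizers $t_h$ at nearby angles. Uniqueness of the maximizer will force $t_h\to t_0$. Set $D(\theta,t):=\partial_\theta F(\theta,t)$ and fix a compact interval $J=[\theta_0-\delta,\theta_0+\delta]\subset I$. Since $D$ is continuous on the compact set $J\times K$, it is uniformly continuous there.

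\emph{Lower bound.} For small $h\neq 0$ we have $H(\theta_0)=F(\theta_0,t_0)$ and $H(\theta_0+h)\ge F(\theta_0+h,t_0)$. The mean value theorem in $\theta$ then gives
\[
H(\theta_0+h)-H(\theta_0)\ge F(\theta_0+h,t_0)-F(\theta_0,t_0)=h\,D(\theta_0+s_h h,t_0)
\]
for some $s_h\in(0,1)$.

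\emph{Upper bound.} Pick $t_h\in K$ with $H(\theta_0+h)=F(\theta_0+h,t_h)$; a maximizer exists because $K$ is compact and $F$ is continuous. Using $H(\theta_0)\ge F(\theta_0,t_h)$ and the mean value theorem again,
\[
H(\theta_0+h)-H(\theta_0)\le F(\theta_0+h,t_h)-F(\theta_0,t_h)=h\,D(\theta_0+s'_h h,t_h)
\]
for some $s'_h\in(0,1)$.

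Dividing by $h$ and treating the signs $h>0$ and $h<0$ separately, the difference quotient is trapped between $D(\theta_0+s_hh,t_0)$ and $D(\theta_0+s'_hh,t_h)$. The first tends to $D(\theta_0,t_0)$ by continuity.

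The key step, and the only real obstacle, is showing that $t_h\to t_0$ as $h\to0$; this is where uniqueness of the maximizer enters. I would argue by contradiction using compactness. Suppose a sequence $h_n\to0$ has $t_{h_n}$ staying a fixed distance from $t_0$. Pass to a subsequence with $t_{h_n}\to t^*\neq t_0$. Note that $H$ is continuous, since $F$ is uniformly continuous on $J\times K$. Therefore
\[
F(\theta_0,t^*)=\lim_n F(\theta_0+h_n,t_{h_n})=\lim_n H(\theta_0+h_n)=H(\theta_0),
\]
so $t^*$ is a maximizer at $\theta_0$. This contradicts the uniqueness of $t_0$. Hence $t_h\to t_0$, and joint continuity of $D$ gives $D(\theta_0+s'_hh,t_h)\to D(\theta_0,t_0)$.

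Both bounds therefore converge to $\partial_\theta F(\theta_0,t_0)$. This yields differentiability of $H$ at $\theta_0$ with $H'(\theta_0)=\partial_\theta F(\theta_0,t_0)$.
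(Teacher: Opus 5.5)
Your proof is correct and follows the paper's argument essentially step for step: the difference quotient is sandwiched between the increments of $F(\cdot,t_0)$ and $F(\cdot,t_h)$, the mean value theorem handles the upper bound, and compactness plus uniqueness of the maximizer give $t_h\to t_0$. The only cosmetic difference is that the paper identifies the subsequential limit by passing to the limit in $F(\theta_0+h_n,t_{h_n})\ge F(\theta_0+h_n,t_0)$, which avoids your separate continuity argument for $H$ via uniform continuity of $F$ on $J\times K$.
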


\begin{proof}
For every $\theta \in I$, the continuous function $t\mapsto F(\theta, t)$ attains its maximum $H(\theta)$ at some point of the compact metric space $K$.
For $h$ sufficiently small, choose $t_h\in K$ such that
\[
    H(\theta_0+h)=F(\theta_0+h,t_h).
\]
We first claim that $t_h\to t_0$ as $h\to0$. Indeed, if $h_n\to0$,
then by compactness any subsequence of $(t_{h_n})$ has a further
subsequence converging to some $t\in K$. Since
\[
    F(\theta_0+h_n,t_{h_n})
    \geq
    F(\theta_0+h_n,t_0),
\]
continuity of $F$ yields
\[
    F(\theta_0,t)\geq F(\theta_0,t_0).
\]
Thus $t=t_0$ by uniqueness of the maximizer, proving the claim.

For $h>0$, the maximality of $t_0$ at $\theta_0$ and of $t_h$ at
$\theta_0+h$ gives
\[
    \frac{F(\theta_0+h,t_0)-F(\theta_0,t_0)}{h}
    \leq
    \frac{H(\theta_0+h)-H(\theta_0)}{h}
    \leq
    \frac{F(\theta_0+h,t_h)-F(\theta_0,t_h)}{h}.
\]
The left-hand side converges to $\partial_\theta F(\theta_0,t_0)$.
By the mean value theorem, the right-hand side equals
$\partial_\theta F(\xi_h,t_h)$ for some $\xi_h$ between
$\theta_0$ and $\theta_0+h$, and hence has the same limit, since
$t_h\to t_0$ and $\partial_\theta F$ is continuous. The case $h<0$
is identical, with the inequalities reversed. Therefore
\[
    H'(\theta_0)=\partial_\theta F(\theta_0,t_0)
\]
as claimed.
\end{proof}

We need the following special case of Lemma~\ref{lem:danskin}.
\begin{lemma}[Danskin's theorem for the support function of a curve]\label{lem:danskin_special_case_cos_sin}
Let $x,y:[0,T]\to\mathbb R$ be continuous functions, and define
\[
    h(\theta)
    :=
    \max_{0\leq t\leq T}
    \bigl(x(t)\cos\theta+y(t)\sin\theta\bigr),
    \qquad \theta\in\mathbb R.
\]
Fix $\theta_0\in\mathbb R$ and suppose that the function $t\mapsto x(t)\cos\theta_0+y(t)\sin\theta_0$
attains its maximum on $[0,T]$ at a unique point $\tau_{\theta_0}$.
Then $h$ is differentiable at $\theta_0$ and
\[
    h'(\theta_0)
    =
    -x(\tau_{\theta_0})\sin\theta_0
    +
    y(\tau_{\theta_0})\cos\theta_0.
\]
\end{lemma}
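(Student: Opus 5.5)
The plan is to deduce this from Lemma~\ref{lem:danskin}, taking $I=\mathbb R$, $K=[0,T]$ (a compact metric space) and
\[
    F(\theta,t):=x(t)\cos\theta+y(t)\sin\theta .
\]
Then $h(\theta)=\max_{t\in K}F(\theta,t)=H(\theta)$ by definition, and the uniqueness hypothesis on the maximizer $\tau_{\theta_0}$ of $t\mapsto F(\theta_0,t)$ is exactly the unique-maximizer assumption of Lemma~\ref{lem:danskin}.

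The hypotheses to verify are as follows. $F$ is continuous on $\mathbb R\times[0,T]$, because it is built from the continuous functions $x,y$ and the smooth functions $\cos,\sin$ by products and sums. For every $(\theta,t)$ the partial derivative
\[
    \partial_\theta F(\theta,t)=-x(t)\sin\theta+y(t)\cos\theta
\]
exists, and it is continuous on $\mathbb R\times[0,T]$ for the same reason.

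Lemma~\ref{lem:danskin} then gives that $h$ is differentiable at $\theta_0$, with
\[
    h'(\theta_0)=\partial_\theta F(\theta_0,\tau_{\theta_0})=-x(\tau_{\theta_0})\sin\theta_0+y(\tau_{\theta_0})\cos\theta_0 .
\]
This is the claim.

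I expect no real obstacle. The only point needing a moment's attention is that Lemma~\ref{lem:danskin} asks for an open interval $I$ and a compact $K$, and both are available here: $I=\mathbb R$, or any bounded open interval around $\theta_0$ if one prefers, and $K=[0,T]$.
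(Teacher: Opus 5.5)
Your proposal is correct and is exactly the paper's argument: you apply Lemma~\ref{lem:danskin} with $K=[0,T]$ and $F(\theta,t)=x(t)\cos\theta+y(t)\sin\theta$, after checking that $F$ and $\partial_\theta F$ are continuous on $\mathbb R\times[0,T]$ and that the maximizer at $\theta_0$ is unique. Your explicit choice $I=\mathbb R$ simply states what the paper leaves implicit.
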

\begin{proof}
Apply Lemma~\ref{lem:danskin} with $K=[0,T]$ and $F(\theta,t) :=x(t)\cos\theta+y(t)\sin\theta$.
Then $F$ is continuous and
$
    \partial_\theta F(\theta,t)
    =
    -x(t)\sin\theta+y(t)\cos\theta
$
is continuous on $\mathbb R\times[0,T]$. By assumption,
$t\mapsto F(\theta_0,t)$ has the unique maximizer
$\tau_{\theta_0}$. Hence
\[
    h'(\theta_0)
    =
    \partial_\theta F(\theta_0,\tau_{\theta_0})
    =
    -x(\tau_{\theta_0})\sin\theta_0
    +
    y(\tau_{\theta_0})\cos\theta_0,
\]
as claimed.
\end{proof}

\section*{Acknowledgements}
This work was supported by the DFG under Germany's Excellence Strategy
EXC 2044/2--390685587, Mathematics M\"unster:
\emph{Dynamics-Geometry-Structure}; by the DFG Priority Programme
SPP 2265, \emph{Random Geometric Systems}; and by the DFG Research
Training Group \emph{Rigorous Analysis of Complex Random Systems}
(RTG 3027, Project Number 524444762).

\section*{Declarations}

\subsection*{Statement on the use of generative AI}

This paper grew out of an extended dialogue between ChatGPT and the human
author. The project started with the question of determining the asymptotic
behavior of $\mathbb P(z\notin\mathcal K_T)$ as $T\to\infty$, now stated in
Proposition~\ref{prop:asympt_large_T}. A natural first approach was based on classical results on the winding
process of planar Brownian motion
\cite{Spitzer1958,Vakeroudis2011} and on explicit survival probabilities
for Brownian motion in planar wedges \cite{Goldman1996,ChupeauBenichouMajumdar2015}.  After substantial
manipulations of special functions, this approach, which is not presented
here, led to an explicit expression for $\mathbb P(z\in\mathcal K_T)$.
Considerable further work was required to reduce that expression to the
simple form now stated in
Theorem~\ref{theo:coverage_function_brownian_hull}.

Once this suggestive probabilistic form had been identified, ChatGPT proposed
a proof based on a Kac--Rice-type counting argument, later reformulated using
the one-dimensional change-of-variables formula,
Lemma~\ref{lem:one_dimensional_coarea_formula}. This approach essentially
led to Lemma~\ref{lem:coarea_radial_function}, but initially required a
rather involved direct evaluation of its right-hand side. Attempts to replace
these computations by a probabilistic argument led first to Remark~\ref{rem:another_set_same_radial}, then to the introduction of
the Gaussian ellipse and ultimately to the proof structure presented in this
paper.

ChatGPT played a crucial role at all stages of this process, including the
development and simplification of proofs and the exploration of related
consequences. All mathematical suggestions produced by ChatGPT were
critically examined and independently verified by the human author.

\subsection*{Conflict of interest statement}
The author declares that he has no conflict of interest.

\subsection*{Data availability statement}
We do not analyze or generate any datasets.

\bibliography{planar_brownian_convex_hull_bib}
\bibliographystyle{plainnat}

\end{document}